\newtheorem{thm}{Theorem}[section]
\newtheorem{lem}[thm]{Lemma}
\newtheorem{prop}[thm]{Proposition}
\newtheorem{defin}[thm]{Definition}
\theoremstyle{definition}
\newtheorem{expl}[thm]{Example}
\newcommand{\dom}{\textnormal{dom}}
\newcommand{\Dom}{\textnormal{Dom}}
\newcommand{\R}{\mathbb{R}}
\newcommand{\wt}{\widetilde}
\title[Zone diagrams]{Zone diagrams in compact subsets of uniformly convex
normed spaces}
\author{Eva Kopeck\'a}
\address{Institute of Mathematics, Czech Academy of Sciences \\
\v{Z}itn\'a 25, CZ-11567 Prague, Czech Republic \\ and
Institut f\"ur Analysis, Johannes Kepler Universit\"at\\
Altenbergerstrasse 69, A-4040 Linz, Austria.}
\email{eva@bayou.uni-linz.ac.at}
\author{Daniel Reem}
\address{The Technion - Israel Institute of Technology, 32000 Haifa, Israel. 
 Current address (July 2011): Department of Mathematics, University of Haifa, Mount Carmel, 31905 Haifa, Israel.}
\email{dream@tx.technion.ac.il}
\author{Simeon Reich}
\address{The Technion - Israel Institute of Technology, 32000 Haifa, Israel}
\email{sreich@tx.technion.ac.il}
\begin{document}
\maketitle
\begin{abstract}
A zone diagram is a relatively new concept which has emerged in computational
geometry and is related to Voronoi diagrams. Formally, it is
a fixed point of a certain mapping, and neither its uniqueness nor its existence
are obvious in advance. It has been studied by several authors,
starting with T. Asano, J. Matou{\v{s}}ek and T. Tokuyama, who considered
the Euclidean plane with singleton sites, and proved the existence and
uniqueness of zone diagrams there. In the present paper we prove the
existence of zone diagrams with respect to finitely many pairwise disjoint
compact sites contained in a compact and convex subset of a uniformly
convex normed space, provided that either the sites or the convex subset
satisfy a certain mild condition. The proof is based on the Schauder fixed
point theorem, the Curtis-Schori theorem regarding the Hilbert cube, and
on recent results concerning the characterization of Voronoi cells as a collection
of line segments and their geometric stability with respect to small
changes of the corresponding sites. Along the way we obtain the continuity
of the Dom mapping as well as interesting and apparently new properties
of Voronoi cells.
\end{abstract}
\section{Introduction}
A zone diagram is a relatively new concept related to geometry and fixed point
theory. In order to understand it better, consider first the
more familiar
concept of a Voronoi diagram. In a Voronoi diagram we start with a set $X$,
a distance function $d$, and a collection of subsets $(P_k)_{k\in K}$ in $X$  (called the sites or the  generators), and with each site $P_k$ we associate the  Voronoi cell $R_k$, that is, 
the set of all $x\in X$ the distance of which to $P_k$ is not
greater than
its distance to the union of the other sites $P_j$, $j\neq k$. On the other hand,
in a zone diagram we associate with each site $P_k$ the set $R_k$ of all
$x\in X$ the distance of which to $P_k$ is not greater than its distance
to the union of the other sets $R_j$, $j\neq k$. Figures \ref{fig:IntroVoronoi}
and  \ref{fig:IntroZone} show the Voronoi and zone diagrams, respectively,
corresponding to the same ten singleton sites in the Euclidean plane.

\begin{figure}
\begin{minipage}[t]{0.45\textwidth}
\begin{center}
{\includegraphics[scale=0.6]{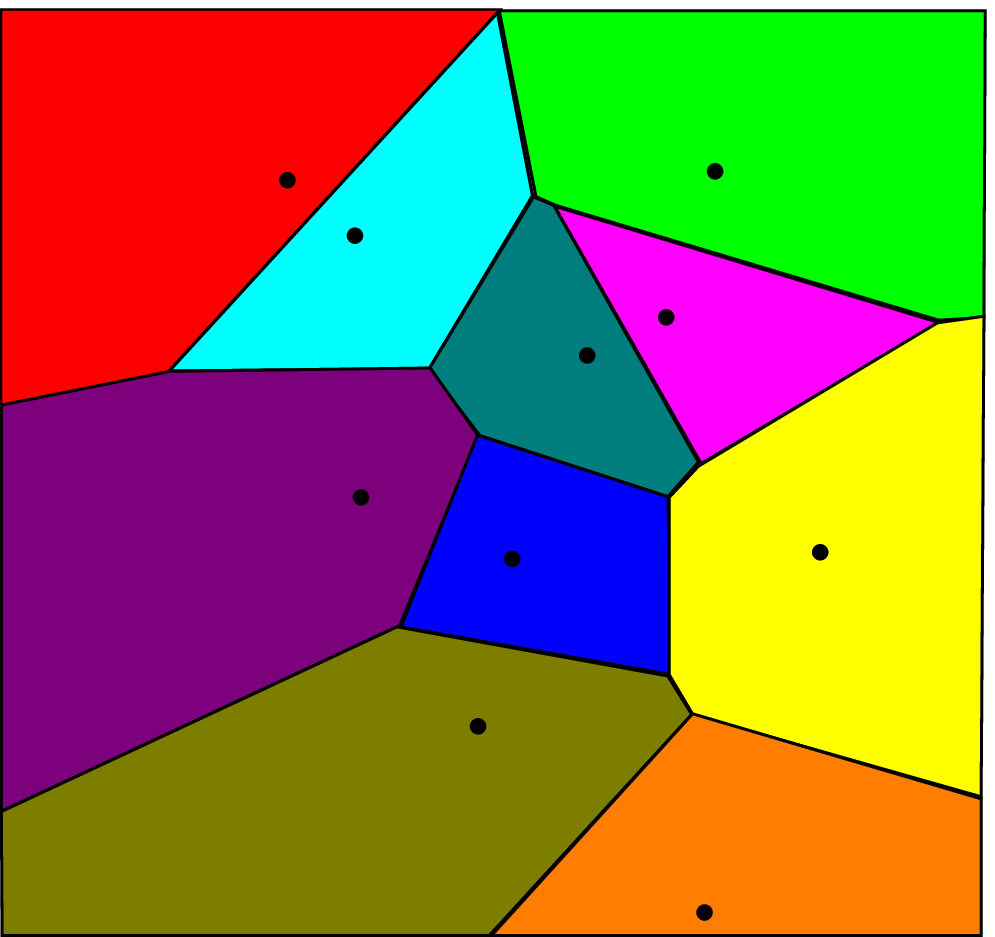}}
\end{center}
 \caption{A Voronoi diagram of 10 point sites in a square in $(\R^2,\ell_2)$.}
\label{fig:IntroVoronoi}
\end{minipage}
\hfill
\begin{minipage}[t]{0.48\textwidth}
\begin{center}
{\includegraphics[scale=0.6]{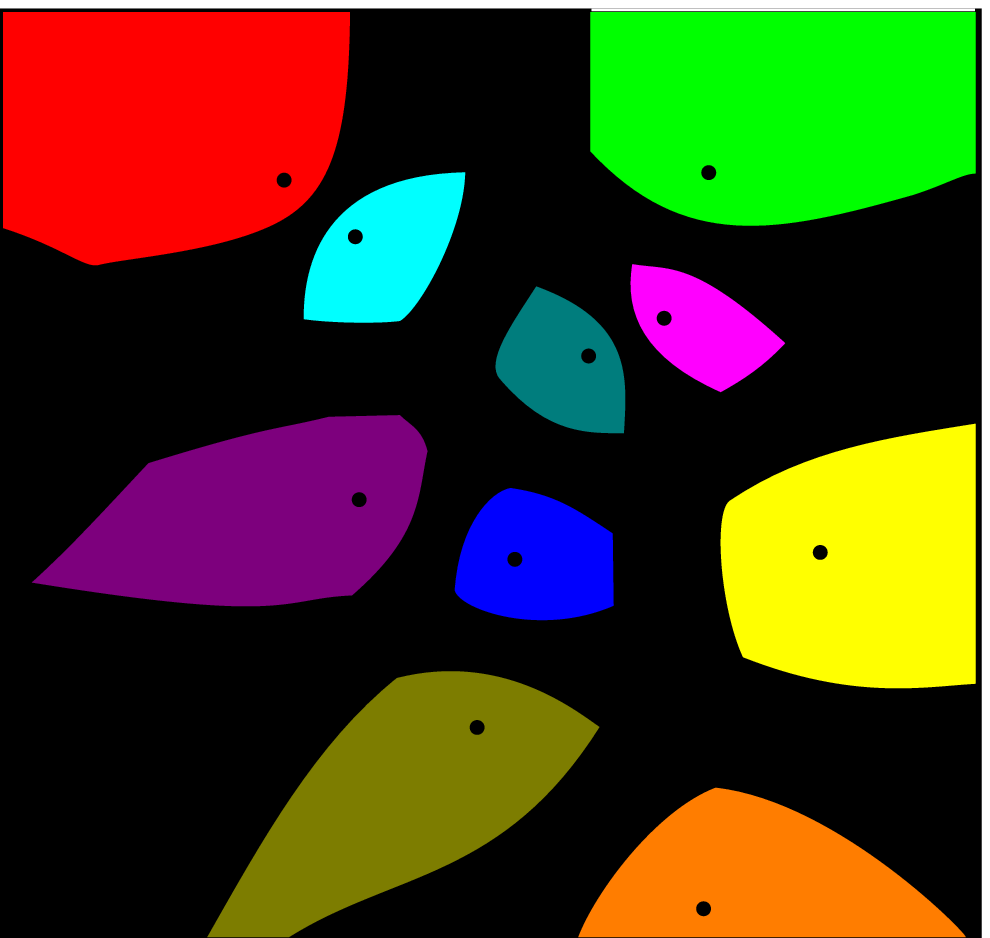}}
\end{center}
 \caption{A zone diagram of the same 10 points as in
Figure \ref{fig:IntroVoronoi}.}
\label{fig:IntroZone}
\end{minipage}
\end{figure}
At first sight, it seems that the definition of a zone diagram is
circular, because the definition of each $R_k$ depends on $R_k$ itself via
the definition of the other cells  $R_j$, $j\neq k$. On second thought, we
see that, in fact, a zone diagram is defined to be a fixed point of a
certain mapping (called the $\Dom$ mapping), that is, a solution of a
certain equation. While the Voronoi diagram is  explicitly defined,  so its
existence (and uniqueness) are obvious, neither the existence nor the
uniqueness of a zone diagram are obvious in advance.
As a result, in addition to the problem of finding algorithms for computing
zone diagrams, we are faced with the more fundamental problem of
establishing their existence (and uniqueness) in various settings, and with the problem of reaching a better  understanding of this concept.

The concept of a zone diagram was first defined and studied
by T. Asano, J. Matou{\v{s}}ek and T. Tokuyama \cite{AMT2,AMTn} (see also  \cite{AsanoTokuyama}), in the case 
where $(X,d)$ was the Euclidean plane, each site $P_k$ was a
single point, and all these (finitely many) points were different. They proved
the existence and uniqueness of a zone diagram in this case, and also
suggested a natural iterative algorithm for approximating it.
Their proofs rely heavily  on the above setting. Several other papers  related to zone diagrams in the plane have been published, e.g., those of T. Asano and D. Kirkpatrick  \cite{AsanKirk}, of  J. Chun, Y. Okada and T. Tokuyama \cite{COT}, and 
recently of S. C. de Biasi, B. Kalantari and I. Kalantari \cite{DeBiasiKalantaris}.

Shortly after \cite{AMTn}, the authors of  \cite{ReemReichZone} considered general sites in abstract spaces, called $m$-spaces, in which $X$ is an arbitrary nonempty set and the ``distance'' function should only satisfy the condition $d(x,x)\leq d(x,y)\,\,\,\,
\forall x,y\in X$ and can take any value in the interval
$[-\infty,\infty]$. They introduced the concept of a double zone diagram,
and using it and the  Knaster-Tarski fixed point theorem, proved the
existence of a zone diagram with respect to any two sites in $X$. They also showed that in general the zone diagram is not unique.  
In a recent work by K. Imai, A. Kawamura,  J. Matou\v{s}ek, Y. Muramatsu and
T. Tokuyama \cite{IKMMT}, the existence and uniqueness of the zone diagram
with respect to any number of general positively separated sites in the
$n$-dimensional
Euclidean space $\R^n$ was announced. The proof is based on results from \cite{ReemReichZone} and on an elegant geometric argument specific to Euclidean spaces. Very recently some of these authors have generalized this result to finite dimensional normed spaces which are both strictly convex and smooth  \cite{KMT}.

In the present paper we prove the existence
of zone diagrams with respect to finitely many pairwise disjoint compact sites contained in a compact and convex 
subset of a (possibly infinite  dimensional) uniformly convex space, provided 
that either the sites or the convex subset satisfy a certain mild condition. 
This mild condition holds, for instance, if either the convex subset $X$ has 
a strictly convex boundary or the sites are contained in the interior of $X$  relative to the affine hull spanned by $X$ (and $X$ is arbitrary). 
The proof is based on the Schauder fixed point theorem, the Curtis-Schori
theorem regarding the Hilbert cube, and on recent results concerning the
characterization of Voronoi cells as a collection of line segments and their
geometric stability with respect to small changes of the corresponding sites
(see Sections \ref{sec:details} and \ref{sec:TechTools}).
Along the way we obtain the continuity of the Dom mapping
(Proposition  \ref{prop:DomContinuous}) in a general setting and
 interesting properties of Voronoi cells, namely 
Lemma \ref{lem:homeomorphism} and Lemma \ref{lem:LocConnected}. 
Although Voronoi diagrams have been the subject of extensive research during 
the last decades \cite{Aurenhammer,OBSC,VoronoiWeb}, 
this research has been mainly focused on Euclidean finite 
dimensional spaces (in many cases just $\R^2$  or $\R^3$), and it seems 
that these lemmata are new even for $\R^2$ with a non-Euclidean norm.

It may be of interest to compare our main existence result with the recent 
existence result described in \cite{KMT}. On the one hand, our result is weaker than that result, since we only prove the existence of a zone diagram in a compact and convex set, while in \cite{KMT} uniqueness is also proved and the setting is the whole space $\R^{n}$. In addition, it seems that some of the arguments in \cite{KMT}, although only formulated for finitely many compact sites, can be extended to infinitely many, positively separated closed sites. On the other hand, our result is stronger in the sense that we allow infinite dimensional spaces and we do not require the smoothness of the norm. As a matter of fact, the counterexamples mentioned in \cite{KMT} show that uniqueness does not necessarily hold if the norm is not smooth. In any case, the strategies used for proving these two results are completely different: in \cite{KMT} the authors use the existence of double zone diagrams (based on the Knaster-Tarski fixed point theorem) and several geometric arguments, and here we use the Schauder fixed point theorem, the Curtis-Schori theorem regarding the Hilbert cube, and several general results about Voronoi cells in uniformly convex normed spaces and elsewhere.

The structure of the paper is as follows: in Section \ref{sec:definitions}
we present the basic definitions and notation. In Section \ref{sec:details}
we provide the outline of the proof of the main result. 
In Section \ref{sec:TechTools} we formulate several claims which are needed 
in the proof. The proof itself is given in
Sections \ref{sec:HilbertCube}, \ref{sec:DomCont}
and \ref{sec:Main}. We conclude the paper with
Section \ref{sec:end}, which contains open problems and two pictures of 
zone diagrams.

We note that although the setting in the
main result is a compact and convex subset of a uniformly convex normed space,
many of the auxiliary results actually hold in a more general setting with 
essentially the same proofs, and therefore we formulate and prove them there.

\section{Definitions and Notation}\label{sec:definitions}
In this section we present our notation and basic definitions.
We consider a closed and convex set $X\neq \emptyset$ in some normed space
$(\widetilde{X},|\cdot|)$, real or complex, finite or infinite dimensional.
The induced metric is $d(x,y)=|x-y|$. We assume that $X$  is not a singleton, for otherwise everything is trivial. 

The notation $|\theta|=1$ will always mean a unit vector $\theta$. Given $p\in X$ we set $\Theta_p=\{\theta: |\theta|=1, p+t\theta\in X\,\,\textnormal{for some}\,\,t>0\}$. This is the set of all directions such that rays emanating from 
$p$ in these directions intersect $X$ not only at $p$. We denote by $[p,x]$ the closed line segment connecting $p$ and $x$, i.e., the set $\{p+t(x-p): t\in [0,1]\}$. We denote by $B(x,r)$ the open ball of center $x$ and radius $r$. The notation $Y\approx Z$ means that the topological spaces $Y$ and $Z$ are homeomorphic.
\begin{defin}\label{def:dom}
 Given two nonempty sets $P,A\subseteq X$, the dominance region
$\dom(P,A)$ of $P$ with respect to $A$ is the set of all $x\in X$
the distance of which to $P$ is not greater than its distance to $A$,
that is, \begin{equation*}
\dom(P,A)=\{x\in X: d(x,P)\leq d(x,A)\}.
\end{equation*}
Here $d(x,A)=\inf\{d(x,a): a\in A\}$.
\end{defin}
\begin{defin}\label{def:Voronoi}
Let $K$ be a set of at least 2 elements (indices), possibly
infinite. Given a  tuple $(P_k)_{k\in K}$ of nonempty subsets
$P_k\subseteq X$, called the generators or the sites, the Voronoi diagram  induced by this tuple is the tuple $(R_k)_{k\in K}$ of nonempty subsets
$R_k\subseteq X$, such that for all $k\in K$,
\begin{equation*}
R_k=\dom(P_k,{\underset{j\neq k}\bigcup P_j})=\{x\in X: d(x,P_k)\leq d(x,P_j)\,\,\forall j\neq k,\,j\in K \}.
\end{equation*}
 In other words, each $R_k$, called a Voronoi cell, is the set of
all $x\in X$ the distance of which to $P_k$ is not greater than
its distance to the union of the other $P_j$, $j\neq k$. 
\end{defin}
\begin{defin}
Let $(X,d)$ be a metric space and let $K$ be a set of at least 2 elements (indices), possibly
infinite. Given a
 tuple $(P_k)_{k\in K}$ of nonempty subsets
$P_k\subseteq X$, a zone diagram with respect to
that tuple is a tuple $R=(R_k)_{k\in K}$ of nonempty subsets
$R_k\subseteq X$ such that
\begin{equation*}
R_k=\dom(P_k,\textstyle{\underset{j\neq k}\bigcup R_j})\quad \forall k\in K.
\end{equation*}
In other words, if we define $X_k=\{C: P_k\subseteq C\subseteq X\}$, then a zone diagram
is a fixed point of the mapping $\Dom:\underset{{k\in K}}\prod
X_k\to \underset{{k\in K}}\prod
X_k$, defined by 
\begin{equation}\label{eq:TZoneDef}
\Dom(R)=(\dom(P_k,\textstyle{\underset{j\neq k}\bigcup R_j}))_{k\in K}.
\end{equation}
\end{defin}
For example, let $P_1=\{p_1\}$ and $P_2=\{p_2\}$ be two different sets in the Euclidean plane, or more generally, in a Hilbert space $X$. In this case the corresponding Voronoi diagram consists of two half-spaces:  $\dom(P_1,P_2)$ is the half-space containing $P_1$ and determined by the
hyperplane passing through the middle of the line segment $[p_1,p_2]$ and perpendicular to it, and $\dom(P_2,P_1)$ is the other half-space. The zone  diagram of $(P_1,P_2)$ exists by the results of \cite{AMTn} (in the case of  the Euclidean plane), or \cite{ReemReichZone} (in the general case),
but it  is not clear how to describe the zone diagram explicitly.

We now recall the definition of strictly and uniformly convex spaces.
\begin{defin}\label{def:UniformlyConvex}\label{page:UniConvDef}
A normed space $(\widetilde{X},|\cdot|)$ is said to be strictly convex
if for all $x,y\in \wt{X}$ satisfying $|x|=|y|=1$ and $x\neq y$,
the inequality $|(x+y)/2|<1$ holds. $(\widetilde{X},|\cdot|)$ is said to be
uniformly convex if for any $\epsilon\in (0,2]$, there exists $\delta\in
(0,1]$ such that for all $x,y\in \wt{X}$, if $|x|=|y|=1$ and $|x-y|\geq \epsilon$, then $|(x+y)/2|\leq 1-\delta$.
\end{defin}
Roughly speaking, if the space is uniformly convex, then for any
$\epsilon>0$, there exists a uniform positive lower bound on how deep the
midpoint between any two unit vectors must penetrate the unit ball,
assuming the distance  between them is at least $\epsilon$.
In general normed spaces the penetration is not necessarily positive,
since the unit ball may contain line segments. The plane $\R^2$ endowed with the 
max norm
$|\cdot|_{\infty}$ is a typical example of this.  A uniformly convex space
is always strictly convex, and if it is also  finite dimensional,
then the converse is true too. The $n$-dimensional Euclidean  space $\R^n$,
or more generally, inner product spaces, the sequence spaces  $\ell_p$,
the Lebesgue spaces $L_p(\Omega)$, $p\in (1,\infty)$, and a uniformly convex
product of a finite number of uniformly convex spaces,
are all examples of uniformly convex spaces. See \cite{Clarkson} and, for
instance, \cite{LindenTzafriri} and \cite{GoebelReich}
for more information regarding uniformly convex spaces.

We now recall three definitions of a topological character. 
\begin{defin}\label{def:HilbertCube}
The Hilbert cube $I^{\infty}$ is the set
$I^{\infty}=\prod_{n=1}^{\infty}[0,1/n]$ as a topological space
the topology of which is induced by the $\ell_2$ norm, or, equivalently, by the product topology.
\end{defin}
\begin{defin}\label{def:LocConnect}
A topological space $X$ is said to be locally (path) connected if for any $x\in X$ and any open set $U$ containing $x$, there exists a (path) connected
open set $V\subseteq U$ such that $x\in V$. $X$ is said to be weakly locally (path) connected, or (path) connected im kleinen, if for any $x\in X$ and any open set $U$ containing $x$, there exists a (path) connected
 set $C\subseteq U$ and an open set $V\subseteq C$ such that $x\in V$.
\end{defin}
\begin{defin}\label{def:Hausdorff}
Let $(X,d)$ be a metric space. Given two nonempty sets $A_1,A_2\subseteq X$, the Hausdorff distance between them is defined by
\begin{equation*}
D(A_1,A_2)=\max\{\sup_{a_1\in A_1}d(a_1,A_2),\sup_{a_2\in A_2}d(a_2,A_1)\}.
\end{equation*}
\end{defin}
Recall that the Hausdorff distance is different from the usual distance between two sets which is defined by 
\begin{equation*}
d(A_1,A_2)=\inf\{d(a_1,a_2): a_1\in A_1, a_2\in A_2\}. 
\end{equation*}
Recall also that if $(X,d)$ is compact and we consider the set of all its nonempty  closed subsets, then this space is a compact metric space with the Hausdorff  distance as its metric \cite{IllanesNadler}.

We end this section with a definition which is pertinent to the formulation of
our main result and some of our auxiliary assertions. It is followed by a brief
discussion.
\begin{defin}\label{def:emanation}
Let $X$ be a closed and  convex subset of a normed space. Let $p\in X$.  Let $\theta\in\Theta_p$. Let $L(\theta)\in (0,\infty]$ be the length of the 
line segment generated from the intersection of $X$ and the ray emanating from $p$  in the direction of $\theta$. The point $p$ is  said to have the emanation property (or to satisfy the  emanation condition)  in the direction of $\theta$ if for each $\epsilon>0$ there exists $\beta>0$ such that for any $\phi\in\Theta_p$, if $|\phi-\theta|<\beta$, then the intersection of $X$ and the ray emanating from $p$  in the direction of $\phi$ is a line segment of length at least $L(\theta)-\epsilon$. 
In other words, $L(\phi)\geq L(\theta)-\epsilon$. The point $p$ is said to have  the emanation property if it has the emanation property in the direction of every $\theta\in\Theta_p$. A subset $C$ of $X$ is said to have  the emanation property if each $p\in C$ has the emanation property. 
\end{defin}

The following examples illustrate the emanation property. In the first four the emanation property holds, and in the last one it does not hold.
\begin{expl}
$X$ is any bounded closed convex set and $p\in X$ is an arbitrary point in the interior of $X$ relative to the affine hull spanned by $X$.
\end{expl}
\begin{expl}
The boundary of the bounded closed and  convex $X$ is strictly convex (if $a\neq b$ are two points in the boundary, then the open line segment $(a,b)$ is contained in the interior of $X$ relative to the affine hull spanned by $X$) and $p\in X$ is arbitrary. Any ball in a strictly convex space has a strictly convex boundary.
\end{expl}
\begin{expl}
$X$ is a cube (of any finite dimension) and $p\in X$ is arbitrary.
\end{expl}
\begin{expl}
$X$ is a closed linear subspace and $p\in X$  is arbitrary. 
\end{expl}
\begin{expl}\label{ex:NonEmanation}
This example shows that the emanation condition does not hold in general. Consider the Hilbert space $\ell_2$. Let $(e_n)_{n=1}^{\infty}$ be the standard basis. Let $y_1=e_1$ and for each  $n>1$ let $y_n=e_1/2+e_n/n$. Let $A=\{-e_1\}$.  Let $X$ be the closed convex hull generated by $A\bigcup\{y_n: n=1,2,\ldots\}$. Let $p=0$ and let  $\theta_n=y_n/|y_n|$ for each $n$. Then $p$ does not satisfy the emanation  condition in the direction of $\theta_1$. Indeed, $\lim_{n\to\infty}\theta_n=\theta_1$ but $L(\theta_n)=\sqrt{0.25+1/n^2}<0.99=L(\theta_1)-0.01$ for each $n>1$. The  subset $X$ is in fact compact since the sequence $(y_n)_{n=1}^{\infty}$  converges. 
\end{expl}
For more details about the emanation property, see  \cite{ReemGeometricStabilityArxiv}. 

\section{Outline of the proof of the main result}\label{sec:details}
In this section we outline the proof of our main result,
Theorem \ref{thm:CompactSites}. It states that there exists a zone diagram
with respect to finitely many pairwise disjoint compact sites in a compact and convex subset of a uniformly convex normed space, provided that either the sites or the convex subset satisfy a certain mild condition. The idea of 
the proof is to find a certain space $Y$ homeomorphic to
the Hilbert cube $I^{\infty}$  ($Y\approx I^{\infty}$ for short) such that
$\Dom(Y)\subseteq Y$ and  $\Dom$ is continuous on $Y$.
Now, if $h:I^{\infty} \to Y$ is a homeomorphism, then $f=h^{-1}\circ
\Dom \circ h: I^{\infty}\to I^{\infty}$ is a continuous mapping which
maps a compact and convex subset of $\ell_2$ into itself, so the
Schauder fixed point theorem \cite{Schauder} (see also \cite[p. 119]{GranasDugondji} and Theorem \ref{thm:Schauder} below) ensures
that $f$ has a fixed point $q\in I^{\infty}$. By taking $R=h(q)$, we see
that $R$ is a fixed point of $\Dom$, that is, $R$ is a zone diagram. In
order to apply this idea, one has to find the set $Y$, to prove that it is
homeomorphic to $I^{\infty}$, and to prove the continuity of $\Dom$ on $Y$.
It has turned out that even in the case of singleton sites in a square in the
Euclidean plane the proof is not obvious (the main difficulty is to prove
the  continuity of $\Dom$), and, in fact, such a proof has never been
published.

The above strategy  was suggested by the first author, and was briefly
mentioned in \cite[p. 1188]{AMTn}. The space $Y$ was taken
to be $\prod_{k\in K} Y_k$,
where $K$ was finite, $Y_k$ was $\{C: P_k\subseteq C\subseteq Q_k\,\,
\textnormal{and}\,\,C\,\,\textnormal{is closed}\}$ and  $Q_k$ was the intersection of the Voronoi cell of $P_k$ with $X$ (the square). Since each site $P_k$ is taken to be a singleton, it follows that each $Q_k$ is actually convex,
so, in particular, it is  a connected and locally connected compact metric
space. Since, in addition, $P_k\neq Q_k$, it follows from the theorem of
D. Curtis and  R. Schori \cite[Theorem 5.2] {CurtisSchori}  stated below
(see also \cite[p. 91]{IllanesNadler})
that $Y_k$, as a metric space endowed with  the Hausdorff metric,
is homeomorphic to $I^{\infty}$. The topology on $Y$ is the product topology,
induced by the uniform Hausdorff metric
$\wt{D}((S_k)_{k\in K},(S'_k)_{k\in K})=\max\{D(S_k,S'_k): k\in K\}$,
so $Y$, as a finite  product of spaces homeomorphic to $I^{\infty}$, is also
homeomorphic to $I^{\infty}$.

\begin{thm}\label{thm:Schauder}{\bf (Schauder)}
Let $X$ be a nonempty convex and compact subset of a normed space. If $f:X
\to X$ is continuous, then it has a fixed point.
\end{thm}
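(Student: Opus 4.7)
The plan is to reduce this infinite-dimensional statement to the classical Brouwer fixed point theorem via finite-dimensional approximations of $f$. The key device is the \emph{Schauder projection}: for each integer $n\geq 1$, compactness of $X$ provides a finite $(1/n)$-net $\{x_1^{(n)},\ldots,x_{m_n}^{(n)}\}\subseteq X$, and I would set $X_n := \operatorname{conv}\{x_1^{(n)},\ldots,x_{m_n}^{(n)}\}$, which by the convexity of $X$ is a compact convex subset of $X$ lying in a finite-dimensional affine subspace.

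Next, I would define the continuous mapping $P_n:X\to X_n$ by
\[
P_n(x)=\frac{\sum_{i=1}^{m_n}\mu_i(x)\,x_i^{(n)}}{\sum_{i=1}^{m_n}\mu_i(x)},\qquad \mu_i(x)=\max\bigl\{0,\,1/n-\|x-x_i^{(n)}\|\bigr\}.
\]
The $(1/n)$-net property guarantees that the denominator is strictly positive on $X$, so $P_n$ is well-defined and continuous, and since $\mu_i(x)>0$ forces $\|x-x_i^{(n)}\|<1/n$, the estimate $\|P_n(x)-x\|\leq 1/n$ holds for every $x\in X$.

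Now the composite $g_n := P_n\circ f\big|_{X_n}:X_n\to X_n$ is a continuous self-map of a finite-dimensional compact convex set, so Brouwer's theorem supplies a fixed point $x_n\in X_n$, that is, $x_n = P_n(f(x_n))$. By compactness of $X$ some subsequence $x_{n_k}$ converges to a point $x^*\in X$; continuity of $f$ yields $f(x_{n_k})\to f(x^*)$, while the uniform approximation bound gives $\|x_{n_k}-f(x_{n_k})\| = \|P_{n_k}(f(x_{n_k}))-f(x_{n_k})\|\leq 1/n_k\to 0$, forcing $x^* = f(x^*)$.

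The principal obstacle is not conceptual but rather the construction and verification of the Schauder projection, especially ensuring that its image lies in $X$ and that the approximation error is uniformly controlled. Of course, Brouwer's fixed point theorem itself is the genuinely nontrivial ingredient, which I would treat as a black box since it is well established and its proof (via simplicial approximation, degree theory, or an algebraic-topological argument) lies outside the scope of the present work.
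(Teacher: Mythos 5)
Your argument is correct: the $(1/n)$-net exists by total boundedness, the Schauder projection $P_n$ is well defined and continuous with $\|P_n(x)-x\|\leq 1/n$ because $\mu_i(x)>0$ forces $\|x-x_i^{(n)}\|<1/n$, Brouwer's theorem applies to the nonempty compact convex finite-dimensional set $X_n$, and the limiting step via a convergent subsequence is sound. The paper offers no proof of this statement --- it is quoted as a classical theorem with references to \cite{Schauder} and \cite[p.~119]{GranasDugondji} --- and your proposal is precisely the standard Schauder-projection reduction to Brouwer found in those sources, so you have simply (and correctly) reconstructed the textbook proof.
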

\begin{thm}\label{thm:CurtisSchori}{\bf (Curtis-Schori)}
Let $X$ be a Peano continuum, that is, a connected and locally connected compact metric space,
and let $P\subseteq X$, $P\neq X$ be closed and nonempty.
Let $2^X_P=\{C: P\subseteq C\subseteq X, \, C\,\,\textnormal{is closed}\}$, endowed with the Hausdorff metric. Then $2^X_P\approx I^{\infty}$.
\end{thm}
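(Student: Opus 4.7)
The plan is to verify the three conditions in Toru\'nczyk's characterization of the Hilbert cube: a nondegenerate compact metric absolute retract satisfying the disjoint cells property is homeomorphic to $I^{\infty}$. I would apply this to $Y=2^X_P$.

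For compactness, recall that the hyperspace of nonempty closed subsets of $X$ is compact under the Hausdorff metric (since $X$ is), and the condition $P\subseteq C$ is preserved under Hausdorff convergence, so $2^X_P$ is a closed subset of this hyperspace and hence compact; nondegeneracy follows from $P\neq X$. For the absolute retract property, I would invoke the classical theorem of Wojdys\l awski that $2^X$ is an AR for any Peano continuum $X$, and exhibit the map $r:2^X\to 2^X_P$, $r(C)=C\cup P$, as a continuous retraction (continuity follows from the general bound $D(A\cup P,B\cup P)\leq D(A,B)$ for the Hausdorff distance); a retract of an AR is an AR. As an alternative that avoids citing Wojdys\l awski, one can first contract $2^X_P$ to $\{X\}$ via $H(C,t)=C\cup E_t$, where $\{E_t\}_{t\in[0,1]}$ is a continuous increasing family of closed sets with $E_0=P$ and $E_1=X$ (which exists because $X$ is a Peano continuum, by a Whitney-type construction using open covers of shrinking mesh), and then combine this contractibility with the local connectedness of $2^X$ to obtain the AR property.

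For the disjoint cells property, given continuous $f,g:I^n\to 2^X_P$ and $\epsilon>0$, I would perturb $f$ and $g$ within $\epsilon$ in the Hausdorff metric so that their images in $2^X_P$ are disjoint. The key geometric ingredient is that $P\neq X$, so there exists a point $x_0\in X\setminus P$ together with a small connected open neighborhood $V$ of $x_0$ whose closure is disjoint from $P$ and of diameter less than $\epsilon$. Using the Peano structure of $\overline V$ one chooses two continuous maps $\iota,\iota':I^n\to 2^{\overline V}$ with disjoint images (this is where the infinite-dimensional richness of $2^{\overline V}$ is used), and sets $\tilde f(s)=f(s)\cup\iota(s)$ and $\tilde g(s)=g(s)\cup\iota'(s)$. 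Since $\iota(s)\neq\iota'(s')$ for every $s,s'$ and the modifications occur only inside $\overline V$, the perturbed images are disjoint; since $\overline V$ has small diameter, the perturbation is $\epsilon$-small.

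The main obstacle will be the disjoint cells property, which requires a genuinely infinite-dimensional argument: one must produce, inside any neighborhood in $2^X_P$, an \emph{arbitrarily large} supply of pairwise disjoint $n$-cells, and this in turn rests on a nontrivial structural analysis of hyperspaces of Peano subcontinua of $X$. A secondary technical point, if one does not take it as a black box, is the fact that $2^X$ itself is an AR; the classical route to this combines the existence of a convex metric on any Peano continuum (Bing) with a Whitney map and a Michael-type selection to assemble the required contraction.
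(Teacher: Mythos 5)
This statement is the Curtis--Schori theorem, which the paper does not prove: it is quoted as an external result from \cite{CurtisSchori} (see also \cite{IllanesNadler}), so there is no internal proof to compare against. For what it is worth, the original argument proceeds via near-homeomorphisms and inverse limits and predates Toru\'nczyk's characterization, so your route is a genuinely different (modern) one. Its first two steps are sound: $2^X_P$ is a closed, nondegenerate subspace of the compact hyperspace of $X$, hence compact; and the map $r(C)=C\cup P$ is a $1$-Lipschitz retraction of $2^X$ onto $2^X_P$ (since $D(A\cup P,B\cup P)\leq D(A,B)$), so Wojdys\l awski's theorem transfers and $2^X_P$ is an AR.

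The disjoint cells step, however, contains a genuine gap. Putting $\tilde f(s)=f(s)\cup\iota(s)$ and $\tilde g(s)=g(s)\cup\iota'(s)$ does not make the images disjoint, even granting $\iota(I^n)\cap\iota'(I^n)=\emptyset$ in $2^{\overline{V}}$: one has $\tilde f(s)\cap\overline{V}=\bigl(f(s)\cap\overline{V}\bigr)\cup\iota(s)$, so $\tilde f(s)$ does not determine $\iota(s)$, and in the extreme case where $f(s)\supseteq\overline{V}$ and $g(s')\supseteq\overline{V}$ the perturbation changes nothing at all, leaving any original intersection of the images intact. Relatedly, the hypothesis you impose on $\iota,\iota'$ is far too weak to carry the argument: it is already satisfied by two constant singleton-valued maps $\iota\equiv\{a\}$, $\iota'\equiv\{b\}$ with $a\neq b$, which plainly cannot separate arbitrary $f$ and $g$; so no ``infinite-dimensional richness'' is actually being used where you claim it is. A correct verification must first normalize the traces $f(s)\cap\overline{V}$ and $g(s')\cap\overline{V}$ (for instance, perturbing so that a continuous invariant such as a Whitney-map level of the trace in $\overline{V}$ takes values in two disjoint intervals for the two maps), and this normalization --- carried out $\epsilon$-continuously and simultaneously in $s$ --- is precisely the hard core of the Curtis--Schori theorem; it cannot be dispatched by the union trick alone. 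As written, the proposal therefore does not constitute a proof.
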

In the general case, the application of the above strategy, and, in
particular, the verification of the hypotheses of Theorem  \ref{thm:CurtisSchori}, are not a simple task, and
they require several additional tools related to dominance regions, 
such as their characterization as unions of line segments,
and their stability with respect to small perturbations of the
relevant sets.
These results will be stated in the next section. 
They have recently been established in \cite{ReemISVD09,ReemGeometricStabilityArxiv},
and their proofs can be found there. Using these results, we first prove the existence of a zone diagram with respect to finite sites, and then, approximating  compact sets by finite subsets of them and applying a continuity argument, we extend this  existence result to any compact sites with the emanation property.

\section{Several technical tools}\label{sec:TechTools}
In this section we either prove or cite several technical claims needed for establishing the main result; see \cite{ReemGeometricStabilityArxiv} and  \cite{ReemISVD09} for those proofs not included here. 

The following theorem is a new representation theorem for dominance regions.
\begin{thm}\label{thm:domInterval}
Let $X$ be a closed and convex subset of a normed space. Let $P,A\subseteq X$ be nonempty. Suppose that the distance between $x$ and $P$ is attained for all $x \in X$. Then $\dom(P,A)$ is a union of
line segments starting at the points of $P$. More precisely,
given $p\in P$ and $|\theta|=1$, let
\begin{equation}\label{eq:Tdef}
T(\theta,p)=\sup\{t\in [0,\infty): p+t\theta\in X\,\,\mathrm{and}\,\,
 d(p+t\theta,p)\leq d(p+t\theta,A)\}.
\end{equation}
Then
\begin{equation*}\label{eq:dom}
\dom(P,A)=\bigcup_{p\in P}\bigcup_{|\theta|=1}[p,p+T(\theta,p)\theta].
\end{equation*}
When $T(\theta,p)=\infty$, the notation $[p,p+T(\theta,p)\theta]$ means the ray $\{p+t\theta: t\in [0,\infty)\}$.
\end{thm}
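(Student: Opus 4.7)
The plan is to prove the two inclusions separately, both of which are relatively direct consequences of the definition of $T(\theta,p)$, convexity of $X$, and the triangle inequality; the assumption that every point of $X$ attains its distance to $P$ will be used only once, in the $\subseteq$ direction.

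First I would establish the containment $\bigcup_{p\in P}\bigcup_{|\theta|=1}[p,p+T(\theta,p)\theta]\subseteq\dom(P,A)$. Fix $p\in P$ and a unit vector $\theta$, and take $t\in[0,T(\theta,p))$. By definition of the supremum, there exists $t'\in(t,T(\theta,p)]$ with $p+t'\theta\in X$ and $t'=d(p+t'\theta,p)\leq d(p+t'\theta,A)$. Since $X$ is convex and contains both $p$ and $p+t'\theta$, it contains the segment, so $p+t\theta\in X$. For any $a\in A$ the triangle inequality gives
\begin{equation*}
d(p+t\theta,a)\geq d(p+t'\theta,a)-(t'-t),
\end{equation*}
hence $d(p+t\theta,A)\geq d(p+t'\theta,A)-(t'-t)\geq t'-(t'-t)=t=d(p+t\theta,p)$, proving $p+t\theta\in\dom(P,A)$. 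If $T(\theta,p)<\infty$, I would pass to the limit $t\to T(\theta,p)^-$: the point $p+T(\theta,p)\theta$ lies in $X$ by closedness, and the inequality $d(\cdot,p)\leq d(\cdot,A)$ is preserved since both functions are continuous. When $T(\theta,p)=\infty$ the segment is the whole ray, which is covered by the argument for arbitrary finite $t$.

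For the reverse inclusion, take $x\in\dom(P,A)$. By hypothesis the distance of $x$ to $P$ is attained at some $p\in P$, so $d(x,p)=d(x,P)\leq d(x,A)$. If $x=p$ there is nothing to prove (take $t=0$ with any unit $\theta\in\Theta_p$, noting that the $\theta$ may be chosen arbitrarily in the display). Otherwise let $\theta=(x-p)/|x-p|$ and $t=|x-p|$, so that $x=p+t\theta$. Since $x\in X$ and $d(p+t\theta,p)=t=d(x,p)\leq d(x,A)=d(p+t\theta,A)$, this value of $t$ lies in the set whose supremum defines $T(\theta,p)$, so $t\leq T(\theta,p)$ and $x\in[p,p+T(\theta,p)\theta]$.

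I do not foresee a real obstacle here; the only subtle point is the endpoint case $t=T(\theta,p)<\infty$ in the first inclusion, where one must verify that the supremum is actually attained (by a limiting argument relying on closedness of $X$ and continuity of $a\mapsto d(\cdot,a)$), rather than merely taking values strictly below it. Everything else is a direct bookkeeping of definitions together with one application of the triangle inequality.
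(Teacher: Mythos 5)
Your proof is correct and follows essentially the route the paper intends: the paper defers the full proof to the cited references but states that it rests on Lemma \ref{lem:segment} (monotonicity of the inequality $d(\cdot,p)\leq d(\cdot,A)$ along the segment $[p,y]$), which is exactly the triangle-inequality step you carry out in the first inclusion. Your handling of the endpoint $t=T(\theta,p)<\infty$ via closedness of $X$ and continuity, and your single use of the attainment hypothesis in the reverse inclusion, are both as expected.
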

The proof of Theorem \ref{thm:domInterval} is based on the following
simple observation, which will also be needed for a different purpose
later (see the proof of Lemma \ref{lem:LocConnected}).
\begin{lem}\label{lem:segment}
Let $(\widetilde{X},|\cdot|)$ be a normed space, and let $\emptyset\neq A\subseteq \widetilde{X}$. Suppose that $y,p\in \widetilde{X}$ satisfy $d(y,p)\leq d(y,A)$. Then
$d(x,p)\leq d(x,A)$ for any $x\in [p,y]$.
\end{lem}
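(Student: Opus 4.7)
The plan is to prove the inequality pointwise against every $a\in A$ and then take an infimum. Parametrize the segment by writing an arbitrary $x\in[p,y]$ as $x=(1-t)p+ty$ with $t\in[0,1]$, so that $|x-p|=t|y-p|$ and $|x-y|=(1-t)|y-p|$.

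Fix an arbitrary $a\in A$. The hypothesis $d(y,p)\leq d(y,A)$ says that $|y-p|\leq|y-a|$ for every $a\in A$. Combining this with the reverse triangle inequality applied to $a$, $y$, $x$ should do it: one has
\begin{equation*}
|x-a|\geq |y-a|-|y-x|\geq |y-p|-(1-t)|y-p|=t|y-p|=|x-p|.
\end{equation*}
Taking the infimum over $a\in A$ then yields $d(x,A)\geq |x-p|=d(x,p)$, which is the desired conclusion.

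There is no real obstacle here: the argument uses only the triangle inequality and the fact that $[p,y]$ lies in a normed space, so it does not rely on any special structure of $X$, on strict or uniform convexity, or on $A$ being closed. The hypothesis is used precisely once, to bound $|y-a|$ from below by $|y-p|$ for every $a\in A$, and the convex combination structure of $x$ accounts for the cancellation that makes $|x-p|$ appear on the right-hand side. The lemma is thus a purely metric fact about dominance being preserved along segments issuing from the dominating point.
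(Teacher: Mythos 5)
Your argument is correct: the pointwise estimate $|x-a|\geq|y-a|-|y-x|\geq|y-p|-(1-t)|y-p|=|x-p|$ for each $a\in A$, followed by taking the infimum over $a$, is exactly the standard (and essentially the only natural) proof of this fact. The paper itself does not reproduce a proof of this lemma, deferring instead to the cited references \cite{ReemISVD09,ReemGeometricStabilityArxiv}, so there is nothing to contrast with; your observation that no closedness of $A$ or convexity assumptions on the norm are needed is accurate and consistent with the generality in which the lemma is stated.
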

The next two theorems describe a continuity property of dominance regions 
and the mapping $T$ defined in \eqref{eq:Tdef} in 
uniformly convex normed spaces. We note that condition \eqref{eq:BallRho} 
below expresses the fact that the set $A$ is ``well distributed in $X$''. 
It obviously holds when $X$ is bounded, but it may also hold even when $X$ 
is not bounded, as in the case where $X=\R^n$ and $A$ is the lattice of points with 
integer coordinates.
\begin{thm}\label{thm:StabilityUC}
Let $X$ be a closed and convex subset of a uniformly convex normed space. Then, under certain conditions, the mapping $\dom(\cdot,\cdot)$ has a uniform continuity property with respect to the Hausdorff distance. More precisely, assume that $P$ and  $A$ are nonempty with $d(P,A)>0$. Suppose that 
\begin{equation}\label{eq:BallRho}
\exists \rho\in (0,\infty)\,\, \textnormal{such that}\,\,\forall x\in X\,\,\textnormal{the open ball}\,\, B(x,\rho)\,\,\textnormal{intersects} \,\,A. 
\end{equation}
Then for each  $\epsilon\in (0,d(P,A)/6)$ there exists $\Delta>0$ such that if $D(A,A')<\Delta$, $D(P,P')<\Delta$, and the distances  between any point $x\in X$ and both $P$ and $P'$ are attained,  then the inequality $D(\dom(P,A),\dom(P',A'))<\epsilon$ holds.
\end{thm}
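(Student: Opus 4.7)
The plan is to exploit uniform convexity to close a quantitative ``near-dominance'' gap. By the symmetry of the hypotheses in $(P, P')$ and in $(A, A')$, it suffices to prove the one-sided estimate: for every $x \in \dom(P, A)$ I will produce $x' \in \dom(P', A')$ with $|x - x'| < \epsilon$; the reverse inclusion follows from the same argument, since the nearest point to any $y \in X$ in both $P$ and $P'$ is attained by hypothesis. Throughout I restrict to $\Delta < d(P, A)/6$, which keeps $d(P', A') \ge d(P, A) - 2\Delta$ uniformly positive. The Hausdorff assumptions give the one-Lipschitz estimates
\[
|d(y, P) - d(y, P')| \le \Delta, \qquad |d(y, A) - d(y, A')| \le \Delta \qquad \forall y \in X,
\]
so combined with $d(x, P) \le d(x, A)$ one obtains the initial deficit $d(x, P') - d(x, A') \le 2\Delta$. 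Condition \eqref{eq:BallRho} furnishes the uniform bound $d(x, A) \le \rho$, hence $|x - p_0| \le \rho$ for the nearest point $p_0 \in P$ to $x$, and a uniform bound on every radius that enters the later estimates.

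Pick $p_0' \in P'$ with $|p_0 - p_0'| < \Delta$, so that $|x - p_0'| \le \rho + \Delta$, and for $s \in [0, |x - p_0'|]$ let $x_s$ denote the point on the segment $[x, p_0']$ at distance $s$ from $x$, so $|x_s - p_0'| = |x - p_0'| - s$ and $|x - x_s| = s$. The heart of the argument is a uniform quantitative lower bound on $|x_s - a'|$ for $a' \in A'$. Since $|p_0' - a'| \ge d(P, A) - 2\Delta$ is bounded away from zero, uniform convexity of the norm, measured by its modulus $\delta_X$, produces a function $\eta:(0, \infty) \to (0, \infty)$, depending only on $\rho$, $d(P, A)$, and $\delta_X$, such that
\[
|x_s - a'| \ge |x - a'| - s + \eta(s) \qquad \forall a' \in A',\ s \in [0, |x - p_0'|].
\]
The proof of this uniform gain naturally splits into two regimes: when the direction from $x$ to $a'$ is separated from the direction $(p_0' - x)/|x - p_0'|$ by a definite amount, the modulus of convexity $\delta_X$ applied to these unit vectors supplies the gain $\eta(s)$; otherwise $a'$ lies nearly collinear with $x$ and $p_0'$ beyond $p_0'$, and the reverse triangle inequality gives $|x - a'| \ge |x - p_0'| + |p_0' - a'| - o(1) \ge |x - p_0'| + d(P, A)/2$, which absorbs the $2\Delta$ deficit outright without any movement.

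Combining the estimates,
\[
d(x_s, P') \le |x_s - p_0'| = |x - p_0'| - s \le d(x, A) + \Delta - s \le d(x, A') + 2\Delta - s,
\]
while $d(x_s, A') \ge d(x, A') - s + \eta(s)$ by taking infimum of the uniform bound over $a' \in A'$. Subtracting yields $d(x_s, A') - d(x_s, P') \ge \eta(s) - 2\Delta$, so $x_s \in \dom(P', A')$ whenever $\eta(s) \ge 2\Delta$. Fixing $s_0 \in (0, \epsilon)$ and then choosing $\Delta < \eta(s_0)/2$, the point $x_{s_0}$ satisfies $|x - x_{s_0}| = s_0 < \epsilon$ and lies in $\dom(P', A')$, which closes the argument. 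The main obstacle is the uniform modulus-of-convexity estimate of the previous paragraph: one must produce a single gain function $\eta$ valid simultaneously for all $x \in \dom(P, A)$ and all $a' \in A'$, cleanly patching the angle-bounded-below regime (where $\delta_X$ supplies the gain) with the nearly-collinear regime (where the slack is inherited from $d(P, A)$ and from the uniform bound on radii provided by \eqref{eq:BallRho}).
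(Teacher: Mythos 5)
The paper itself contains no proof of Theorem~\ref{thm:StabilityUC}: the result is imported from \cite{ReemGeometricStabilityArxiv,ReemISVD09}, and the only hint given about its proof is the remark that uniform convexity enters through Clarkson's strong triangle inequality. Your strategy --- push each $x\in\dom(P,A)$ a small distance $s$ along the segment toward a near-nearest point $p_0'$ of $P'$, and use a quantitative strict triangle inequality to show that the distance to $A'$ decreases by strictly less than $s$, thereby overcoming the $2\Delta$ deficit --- is exactly that mechanism, and most of the surrounding bookkeeping (the $1$-Lipschitz dependence of $d(\cdot,P)$ on $P$ in the Hausdorff metric, the bound $d(x,A)\le\rho$ from \eqref{eq:BallRho}, the symmetry of the two inclusions) is correct.

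Two points need repair before this is a proof. First, the displayed inequality $|x_s-a'|\ge|x-a'|-s+\eta(s)$ is false when $a'$ is collinear with $x$ and $p_0'$ beyond $p_0'$ (the distance then drops by exactly $s$), so you cannot ``take the infimum over $a'\in A'$'' to conclude $d(x_s,A')\ge d(x,A')-s+\eta(s)$: a single collinear $a'$ invalidates the infimum bound. You already possess the fix --- in the nearly collinear regime $|x-a'|\ge|x-p_0'|+d(P,A)/2-o(1)$ --- but it must be deployed at the level of the per-point comparison $|x_s-p_0'|\le|x_s-a'|$ for each $a'$ separately (three cases: $|u-v|\ge\beta$, $|u-v|<\beta$, and $|x-a'|$ large enough to be irrelevant), not by merging the two regimes into one lower bound on $d(x_s,A')$. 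Second, the core estimate --- that $|Ru-sv|\ge R-s+\eta(s)$ uniformly when the unit vectors satisfy $|u-v|\ge\beta$ and $R$ ranges over a compact interval bounded away from $0$ --- is asserted rather than proved; it is the entire technical content of the theorem, and making it uniform requires the lower bound $|x-a'|\ge d(x,A')\ge d(P,A)/2-\Delta$ (Lemma~\ref{lem:DistanceToP} applied on $\dom(P,A)$) in addition to the upper bounds you cite, since near-equality in the triangle inequality yields directional information only when both summands have norms bounded below. You should also dispose of the degenerate case $|x-p_0'|<s_0$, where $x_{s_0}$ is undefined but $x$ itself already lies in $\dom(P',A')$ for small $\Delta$. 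With these repairs the argument is sound and, as far as the paper's remark allows one to judge, follows the same route as the cited external proof.
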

\begin{thm}\label{thm:T}
Let $X$ be a closed and convex subset of a uniformly convex
normed space. Let $p\in X$ and suppose that $p$ has the emanation property. 
Then the mapping $T(\cdot,p)$ has a certain continuity property. More precisely, let  $A\subset X$ be nonempty such that \eqref{eq:BallRho} holds and that $d(p,A)>0$.  Then for each $\epsilon\in (0,d(p,A)/6)$ and each $\theta \in \Theta_p$ there exists $\Delta>0$ such that for each  $\phi \in \Theta_p$, if $|\theta-\phi|<\Delta$, then $|T(\theta,p)-T(\phi,p)|\leq \epsilon$. In addition, the range of the mapping $T$ is bounded by $\rho$. 
\end{thm}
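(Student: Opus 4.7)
The plan is to prove the boundedness first, and then reduce continuity of $T(\cdot,p)$ to continuity of two simpler functions.

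The boundedness of the range of $T$ by $\rho$ is immediate from condition \eqref{eq:BallRho}: whenever $t\ge\rho$, the open ball $B(p+t\theta,\rho)$ meets $A$, so $d(p+t\theta,A)<\rho\le t=d(p+t\theta,p)$, which excludes such $t$ from the set appearing in \eqref{eq:Tdef}. Hence $T(\theta,p)\le \rho$ for every $\theta\in\Theta_p$.

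For continuity, I would introduce the auxiliary quantity $T^{*}(\theta,p)=\sup\{t\ge 0 : d(p+t\theta,p)\le d(p+t\theta,A)\}$, the analogue of $T$ without the ambient constraint $p+t\theta\in X$. Since $p\in X$ and $X$ is convex, $\{t\ge 0: p+t\theta\in X\}=[0,L(\theta)]$, from which one checks directly that $T(\theta,p)=\min\{L(\theta),T^{*}(\theta,p)\}$. So it suffices to establish the continuity of $L$ and of $T^{*}$ at $\theta$. For $L$, lower semi-continuity at $\theta$ is precisely the assumed emanation property, while upper semi-continuity holds automatically: if $L(\theta)<\infty$ and $t=L(\theta)+\epsilon$, then $p+t\theta$ belongs to the open set $\widetilde X\setminus X$, so $p+t\phi\notin X$ for all $\phi$ in some neighborhood of $\theta$, giving $L(\phi)\le L(\theta)+\epsilon$. (The case $L(\theta)=\infty$ is handled directly by the emanation hypothesis, which then forces $L(\phi)=\infty$ near $\theta$.)

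The continuity of $T^{*}$ is the step where uniform convexity enters, and I expect this to be the main obstacle. Consider $g_{\theta}(t)=t-d(p+t\theta,A)$, which is continuous, non-decreasing (by the $1$-Lipschitz property of $d(\cdot,A)$ along the ray), satisfies $g_{\theta}(0)=-d(p,A)<0$ and $g_{\theta}(\rho)>0$, and vanishes at $t=T^{*}(\theta,p)$. The key ingredient is a \emph{uniform transversality} estimate: for each $\epsilon\in(0,d(p,A)/6)$ there exists $\delta=\delta(\epsilon)>0$, depending on the modulus of convexity of the norm, on $d(p,A)$, and on $\rho$, but not on $\theta$, such that
\[
g_{\theta}(T^{*}(\theta,p)+\epsilon)\ge \delta \quad \text{and} \quad g_{\theta}(T^{*}(\theta,p)-\epsilon)\le -\delta \ \text{whenever } T^{*}(\theta,p)\ge \epsilon.
\]
This is the same modulus-of-convexity calculation that underlies Theorem \ref{thm:StabilityUC}, and should be extractable from (or proved in parallel with) the stability machinery of \cite{ReemGeometricStabilityArxiv}. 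Granted it, continuity of $T^{*}$ follows quickly: since $d(\cdot,A)$ is $1$-Lipschitz, $|g_{\phi}(s)-g_{\theta}(s)|\le s|\phi-\theta|\le \rho|\phi-\theta|$ for $s\in[0,\rho]$, so for $|\phi-\theta|<\delta/\rho$ the signs of $g_{\phi}$ at $T^{*}(\theta,p)\pm\epsilon$ match those of $g_{\theta}$, forcing $T^{*}(\phi,p)\in [T^{*}(\theta,p)-\epsilon, T^{*}(\theta,p)+\epsilon]$. Combined with the continuity of $L$ established above, this yields the continuity of $T=\min(L,T^{*})$ at $\theta$, as required.
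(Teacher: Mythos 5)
The paper itself contains no proof of Theorem \ref{thm:T}: it is quoted from \cite{ReemGeometricStabilityArxiv} (see the opening of Section \ref{sec:TechTools} and the remark that uniform convexity enters via Clarkson's strong triangle inequality), so there is no in-paper argument to measure yours against. Judged on its own, your skeleton is correct. The boundedness argument works (for $t\ge\rho$ with $p+t\theta\in X$, condition \eqref{eq:BallRho} gives $d(p+t\theta,A)<\rho\le t$, and values $t$ with $p+t\theta\notin X$ are excluded anyway). The identity $T=\min\{L,T^{*}\}$ holds because, by Lemma \ref{lem:segment}, $\{t\ge 0: d(p+t\theta,p)\le d(p+t\theta,A)\}$ is an interval containing $0$, while $\{t\ge 0: p+t\theta\in X\}=[0,L(\theta)]$ by the convexity and closedness of $X$; your treatment of $L$ (lower semicontinuity $=$ emanation, upper semicontinuity from the openness of $\widetilde{X}\setminus X$) is also fine, modulo truncating $L$ at, say, $\rho+1$ so that the minimum behaves when $L(\theta)=\infty$ or when $T^{*}$ exceeds $\rho$. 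One small slip: $g_{\theta}(\rho)>0$ is guaranteed by \eqref{eq:BallRho} only when $p+\rho\theta\in X$; this plays no role later, but as stated it is not justified.

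The genuine gap is that the entire content of the theorem --- the only point where uniform convexity is used --- sits inside your ``uniform transversality estimate,'' which you assert rather than prove; as written the proposal is a reduction to an unproved lemma of essentially the same depth. The lemma is true, and the missing argument runs as follows. First, $T^{*}(\theta,p)\ge d(p,A)/2>3\epsilon$, since $d(p+t\theta,A)\ge d(p,A)-t\ge t$ for $t\le d(p,A)/2$. Put $y=p+T^{*}\theta$, so $d(y,A)=T^{*}$. If $g_{\theta}(T^{*}-\epsilon)>-\delta$, choose $a\in A$ with $d(z,a)<T^{*}-\epsilon+\delta$ for $z=p+(T^{*}-\epsilon)\theta$; then $d(y,z)+d(z,a)-d(y,a)<\delta$ with $d(y,z)=\epsilon$ and $d(z,a)\ge 2\epsilon$, so the quantitative converse of the triangle inequality in a uniformly convex space forces $(a-z)/|a-z|$ to be close to $-\theta$, hence $a$ to be close to $p+(T^{*}-d(y,a))\theta$, a point within $\delta$ of $p$ --- contradicting $d(p,A)\ge 6\epsilon$ once $\delta$ is small in terms of the modulus of convexity, $\epsilon$, $d(p,A)$ and $\rho$ (but not $\theta$). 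The bound $g_{\theta}(T^{*}+\epsilon)\ge\delta$ is proved symmetrically, with the corner at $y$ between $p+(T^{*}+\epsilon)\theta$ and a near-minimizer for $y$. Supplying this computation is precisely what \cite{ReemGeometricStabilityArxiv} does; without it, your argument correctly organizes the proof but does not yet constitute one.
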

We remark in passing that in the proofs of Theorems \ref{thm:StabilityUC} and \ref{thm:T}, the uniform convexity of the space enters, {\em inter alia}, through Clarkson's strong triangle inequality \cite[Theorem 3]{Clarkson}.

The following lemma will be needed for proving the local connectedness of 
certain dominance regions (Lemma \ref{lem:homeomorphism} and Lemma 
\ref{lem:LocConnected}). Note that the set $B_p$ is not necessarily the unit  ball (or a ball of some closed affine hull), as in the case where $X$ is the Hilbert cube $I^{\infty}$ in $\ell_2$, or more generally, a compact and convex subset of an infinite dimensional normed space.
\begin{lem}\label{lem:B}
Let $X$ be a convex set in a normed space and let $p\in X$. Define the set  $B_p=\{r\theta: r\in [0,1],\,\theta\in \Theta_p\}$. Then $B_p$ is convex.
\end{lem}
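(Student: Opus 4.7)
The plan is to take two points $u, v \in B_p$ and a scalar $\lambda \in [0,1]$ and to exhibit their convex combination $w := \lambda u + (1-\lambda)v$ in the form $r\theta$ with $r \in [0,1]$ and $\theta \in \Theta_p$. Writing $u = r_1\theta_1$ and $v = r_2\theta_2$ with $r_i \in [0,1]$ and $\theta_i \in \Theta_p$, I would first dispose of the degenerate cases: if $\Theta_p = \emptyset$ then $B_p = \emptyset$ is vacuously convex, and if $w = 0$ then $w = 0\cdot\theta \in B_p$ for any $\theta \in \Theta_p$. So from now on assume $w \neq 0$ and $\Theta_p \neq \emptyset$.

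Next, set $a = \lambda r_1$ and $b = (1-\lambda)r_2$, so that $w = a\theta_1 + b\theta_2$ with $a, b \geq 0$ and $a+b > 0$. The vector $\eta := \tfrac{a}{a+b}\theta_1 + \tfrac{b}{a+b}\theta_2$ equals $w/(a+b)$ and is therefore nonzero. The key step is to show that the unit vector $\theta := \eta/|\eta|$ belongs to $\Theta_p$. For this, I would pick $t_i > 0$ with $p + t_i\theta_i \in X$, and use the convexity of $X$ together with $p \in X$ to deduce that $p + s\theta_i \in X$ for every $s \in [0, t_i]$. Setting $s := \min\{t_1, t_2\} > 0$ and applying convexity of $X$ once more yields
\begin{equation*}
p + s\eta = \tfrac{a}{a+b}(p + s\theta_1) + \tfrac{b}{a+b}(p + s\theta_2) \in X,
\end{equation*}
which rewrites as $p + (s|\eta|)\theta \in X$ with $s|\eta| > 0$, so $\theta \in \Theta_p$.

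To finish, I would write $w = (a+b)|\eta|\,\theta$ and observe that the nonnegative scalar $r := (a+b)|\eta| = |w|$ satisfies $|w| = |a\theta_1 + b\theta_2| \leq a + b = \lambda r_1 + (1-\lambda)r_2 \leq 1$ by the triangle inequality together with $r_i \leq 1$, so $w = r\theta \in B_p$. The only potentially awkward point is ensuring that $\eta$ does not vanish (which would prevent normalizing it to a unit direction), but the identity $\eta = w/(a+b)$ together with $w \neq 0$ and $a+b > 0$ rules this out, and in particular circumvents the otherwise worrisome case of $\theta_1$ and $\theta_2$ being antipodal.
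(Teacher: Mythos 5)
Your proof is correct and follows essentially the same route as the paper's: normalize the convex combination to a unit direction, verify that this direction lies in $\Theta_p$ by exhibiting a point of $X$ on the corresponding ray via the convexity of $X$, and bound the radius $r\leq 1$ by the triangle inequality. Your truncation to the common parameter $s=\min\{t_1,t_2\}$ neatly sidesteps the coefficient-matching computation the paper performs to find $\beta_1$, $\beta_2$ and $t$, but the underlying argument is the same.
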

\begin{proof}
Let $b_1,b_2\in B_p$. Then $b_i=r_i\theta_i$ for some $r_i\in [0,1]$ and 
$|\theta_i|=1$. Let $b=\lambda_1 b_1+\lambda_2 b_2$ for 
some $\lambda_1,\lambda_2\in [0,1]$ with $\lambda_1+\lambda_2=1$. 
Suppose that $b\neq 0$, for otherwise $b\in B_p$. 
Then $b=r\theta$ with $r=|\lambda_1 b_1+\lambda_2 b_2|$ 
and $\theta=(\lambda_1 b_1+\lambda_2 b_2)/r$. 
Clearly, $0\leq r\leq r_1\lambda_1+r_2\lambda_2\leq 1$, 
and $|\theta|=1$, so it remains to prove that $p+t\theta\in X$ for some $t>0$.

By the definition of $\theta_i$, there is $t_i>0$ such that 
$y_i:=p+t_i\theta_i\in X$. We can assume that $\lambda_i\neq 0$ 
and $r_i\neq 0$ for $i\in \{1,2\}$, for otherwise $\theta=\theta_j$ 
for $j\neq i$ and $p+t_j\theta\in X$. 
Let $\beta_1=\lambda_1 r_1 t_2/(\lambda_1r_1t_2+\lambda_2r_2t_1)$, 
$\beta_2=\lambda_2r_2t_1/(\lambda_2 r_2 t_1+\lambda_1 r_1 t_2)$ 
and $t=(\beta_1 t_1+\beta_2 t_2)r/(\lambda_1 r_1+\lambda_2 r_2)$. 
These values were obtained by equating the coefficients of the $\theta_i$ 
in the equation $p+t\theta=\beta_1 y_1+\beta_2 y_2$. 
It is easy to check that 
$\beta_i\in (0,1)$, $\beta_1+\beta_2=1$, $t>0$ 
and $p+t\theta=\beta_1 y_1+\beta_2 y_2$. 
Thus $p+t\theta\in X$ because $X$ is convex.
\end{proof}
The next two lemmata are probably known, at least in a version close in  its spirit to their formulation (see, e.g., \cite[p. 162, exercise 6]{Munkres}  and \cite[Proposition~ 10.7, pp. 82-83]{IllanesNadler}), but we include their 
proof for the sake of completeness.
\begin{lem}\label{lem:WeakStrongConnect}
If $Z$ is a topological space which is weakly locally (path) connected, then it is locally (path) connected.
\end{lem}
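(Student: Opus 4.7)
The plan is to argue that in a weakly locally (path) connected space, every (path) component of every open set is itself open; this is the standard characterization of local (path) connectedness, so it will furnish the required open (path) connected neighborhood directly.

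More concretely, fix $x\in Z$ and an open set $U\subseteq Z$ with $x\in U$. Let $W$ be the (path) component of $x$ in the subspace $U$. By construction $W$ is (path) connected and $x\in W\subseteq U$, so it suffices to show that $W$ is open in $Z$. To see this, pick an arbitrary $y\in W$. Since $y\in U$ and $U$ is open, the weak local (path) connectedness hypothesis applied at $y$ and to $U$ yields a (path) connected set $C\subseteq U$ together with an open set $V\subseteq C$ such that $y\in V$. Because $C$ and $W$ are both (path) connected subsets of $U$ sharing the point $y$, their union $C\cup W$ is again (path) connected and lies in $U$; the maximality of the (path) component $W$ then forces $C\subseteq W$, hence $V\subseteq W$. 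Thus $y$ has an open neighborhood contained in $W$, and since $y$ was arbitrary, $W$ is open in $Z$.

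In the ``path'' version the only fact specific to paths that I need is that the union of two path connected sets with non-empty intersection is path connected, which is immediate by concatenation of paths through the common point. No uniform convexity or other structure on $Z$ is used, only the elementary properties of (path) components and the definitions in Definition~\ref{def:LocConnect}. I do not foresee any serious obstacle; the only subtlety is to remember that the (path) component $W$ is taken inside $U$ (not inside $Z$), so that the sets $C$ furnished by weak local (path) connectedness, which are only required to lie in $U$, can legitimately be absorbed into $W$ by the maximality argument above.
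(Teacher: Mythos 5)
Your proposal is correct and follows essentially the same route as the paper's proof: both show that the (path) component of $x$ in an open set $U$ is open by applying weak local (path) connectedness at each of its points and invoking maximality of the component. Your justification of $C\subseteq W$ via the union $C\cup W$ is just a slightly more explicit version of the maximality step the paper uses.
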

\begin{proof}
The proof is based on the fact that $Z$ is locally (path) connected if and
only if for every open set $U$ of $Z$ each (path) connected component of
$U$  is open in $Z$ \cite[p. 161]{Munkres}. Given an open set $U$, let $x\in U$. Let $C_x$ be the (path) connected component of $x$ in $U$. Since
$Z$ is weakly locally (path) connected, there exist a (path) connected set
$C\subseteq U$ and an open set $V\subseteq C$ such that $x\in V$.
By the maximality of $C_x$, we have $C\subseteq C_x$,
and hence $V\subseteq C_x$. Thus $x$ belongs to the interior of $C_x$, and
in the
same way all the other points of $C_x$ are in its interior. Hence $C_x$ is open in $X$ and the same is true for all other (path) components of $U$.
\end{proof}
\begin{lem}\label{lem:FiniteLocConnect}
Let $(Z,d)$ be a metric space and suppose that $Z=\bigcup_{i=1}^m Z_i$ for some locally (path) connected closed subsets $Z_i$ of $Z$. Then $Z$ is locally (path) connected.
\end{lem}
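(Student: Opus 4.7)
The plan is to reduce the statement to Lemma \ref{lem:WeakStrongConnect}: it suffices to show that $Z$ is weakly locally (path) connected. So I fix an arbitrary $x\in Z$ and an open set $U\subseteq Z$ with $x\in U$, and I will construct a (path) connected set $C\subseteq U$ together with an open set $V\subseteq C$ containing $x$.

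Let $I=\{i\in\{1,\ldots,m\}: x\in Z_i\}$. For each $i\in I$, the set $U\cap Z_i$ is open in $Z_i$ and contains $x$. Since $Z_i$ is locally (path) connected, there is a (path) connected set $V_i$ which is open in $Z_i$ and satisfies $x\in V_i\subseteq U\cap Z_i$. Write $V_i=U_i\cap Z_i$ where $U_i$ is open in $Z$. Put $C=\bigcup_{i\in I}V_i$. Because every $V_i$ is (path) connected and all of them share the point $x$, the union $C$ is (path) connected; moreover $C\subseteq U$. This already gives the ``connected set in $U$'' part of the weak local (path) connectedness condition.

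The main (and only non-trivial) obstacle is to produce an open set $V$ in $Z$ with $x\in V\subseteq C$. The key observation is that the indices outside $I$ can be pushed away using the closedness of the $Z_j$'s: for every $j\notin I$ we have $x\notin Z_j$ and $Z_j$ is closed, so $r_j:=d(x,Z_j)>0$. Since there are only finitely many indices, $r:=\min\{r_j:j\notin I\}>0$ (with the convention that the minimum over an empty set is $+\infty$, which causes no harm). Define
\[
V=U\cap B(x,r)\cap\bigcap_{i\in I}U_i .
\]
This is open in $Z$ as a finite intersection of open sets, and clearly $x\in V$. If $y\in V$ then $d(x,y)<r$ forces $y\notin Z_j$ for every $j\notin I$; since $Z=\bigcup_{i=1}^m Z_i$, we must have $y\in Z_{i_0}$ for some $i_0\in I$, and then $y\in U_{i_0}\cap Z_{i_0}=V_{i_0}\subseteq C$. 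Hence $V\subseteq C$.

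This verifies that $Z$ is weakly locally (path) connected at every point, so Lemma \ref{lem:WeakStrongConnect} yields local (path) connectedness of $Z$. The only potential pitfall I would be careful about is the edge case where $I=\{1,\ldots,m\}$ (then the ball factor $B(x,r)$ is simply omitted, and the argument goes through unchanged), and the symmetric observation that the same proof works verbatim for the connected and the path-connected versions, since in both cases a union of (path) connected sets sharing a common point is again (path) connected.
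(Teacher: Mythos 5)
Your proof is correct and follows essentially the same route as the paper's: both reduce to weak local (path) connectedness via Lemma \ref{lem:WeakStrongConnect}, use the closedness of the sets $Z_j$ with $x\notin Z_j$ to shrink to a ball avoiding them, and intersect the ambient open sets $U_i$ to locate an open set inside the union of the $V_i$. The only difference is organizational: you treat general $m$ in one uniform argument, whereas the paper proves the case $m=2$ (split into the subcases $x\notin Z_j$ and $x\in Z_1\cap Z_2$) and then invokes induction.
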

\begin{proof}
It suffices to prove the assertion for $m=2$; the general case follows
by induction. Let $x\in Z$. Then $x\in Z_i$ for some $i$. Let $U$ be an
open neighborhood of $Z$ with $x\in U$. Assume first that $x\notin Z_j$ for $j\neq i$.
Then the intersection $U\cap B(x,d(x,Z_j)/2)\subseteq Z_i$  contains
an open (path) connected subset $V$ of $Z_i$ with $x\in V$,
since $Z_i$ is locally (path) connected.
Note that $V$ is also (path) connected with respect to the topology of $Z$.
Since $d(V,Z_j)>0$, it follows that $V$ is actually open in $Z$, so $x$
has an open (path) connected neighborhood contained in $U$ in this case.

Assume now that $x\in Z_1\cap Z_2$. Then $U\cap Z_i$ is
an open neighborhood of $x$ in $Z_i$, so for each $i\in \{1,2\}$, there is an
open (path) connected subset $V_i\subseteq U\cap Z_i$ of $Z_i$
with $x\in V_i$, because $Z_i$ is locally (path) connected.
The union $V_1\cup V_2$ is a (path) connected subset of $Z$ which is contained
in $U$, but it is not clear whether it is open in $Z$. However,
by definition, $V_i=V'_i\cap Z_i$ for some open set $V'_i$ of $Z$, and
\begin{equation*}
x\in V'_1\cap V'_2=(V'_1\cap V'_2\cap Z_1)\cup (V'_1\cap V'_2\cap Z_2)
\subseteq V_1\cup V_2\subseteq U.
\end{equation*}
Since $x$ is arbitrary, this proves that $Z$ is weakly locally (path)
connected, so by Lemma \ref{lem:WeakStrongConnect} it is locally (path) connected.
\end{proof}
The following lemma will be useful for proving the main result of Section \ref{sec:HilbertCube}.
\begin{lem}\label{lem:DistanceToP}
Let $X$ be a convex set in a normed space, and suppose that $P,A\subset X$ satisfy $r:=d(P,A)>0$. Let $B(P,r/2):=\{x\in X: d(x,P)<r/2\}$. Then $B(P,r/2)\subseteq \{x\in X: d(x,P)<d(x,A)\}$ and  $d(x,P)\geq r/2$ for any $x\in X$ satisfying $x\in \dom(A,P)$.
\end{lem}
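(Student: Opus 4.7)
The plan is to use the triangle inequality together with the hypothesis $d(P,A)=r>0$ to derive a single key reverse estimate, and then to read off both claims from it.

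First I would establish the following basic inequality for any $x\in X$:
\[
d(x,A)\geq r-d(x,P).
\]
To get this, I would fix arbitrary $p\in P$ and $a\in A$, note that the triangle inequality gives $d(x,a)\geq d(p,a)-d(x,p)\geq r-d(x,p)$, and then take the infimum first over $p\in P$ and then over $a\in A$. This step is completely elementary and essentially the only computation needed.

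For the first assertion, suppose $x\in B(P,r/2)$, so $d(x,P)<r/2$. By the inequality above, $d(x,A)\geq r-d(x,P)>r/2>d(x,P)$, hence $x$ lies in $\{x\in X:d(x,P)<d(x,A)\}$.

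For the second assertion, suppose $x\in\dom(A,P)$, i.e.\ $d(x,A)\leq d(x,P)$. Combining this with the inequality above yields $d(x,P)\geq d(x,A)\geq r-d(x,P)$, so $2d(x,P)\geq r$ and therefore $d(x,P)\geq r/2$.

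There is no real obstacle here; the only thing to be careful about is to take the two infima in the correct order when deriving the preliminary inequality, and to note that convexity of $X$ and the fact that $P,A$ are subsets of $X$ are used only to make the objects $d(x,P)$ and $d(x,A)$ well defined (convexity itself plays no essential role in the argument).
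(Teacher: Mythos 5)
Your proof is correct, and for the second assertion it takes a genuinely different and more elementary route than the paper. For the first assertion both arguments are essentially the same triangle-inequality computation (the paper writes $r\leq d(a,P)\leq d(a,x)+d(x,P)$ and rearranges, which is your key inequality $d(x,A)\geq r-d(x,P)$ in disguise). For the second assertion, however, the paper argues geometrically: it observes that $x\notin B(P,r/2)$, takes the segment $[x,p]$ for an arbitrary $p\in P$, uses connectedness to find a point $y$ where the segment meets the boundary of $B(P,r/2)$, notes $d(y,P)\geq r/2$ since the ball is open, and concludes via the additivity $d(x,p)=d(x,y)+d(y,p)\geq r/2$. This is where the convexity of $X$ and the normed (segment) structure actually enter the paper's argument. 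Your derivation of $d(x,P)\geq d(x,A)\geq r-d(x,P)$, hence $d(x,P)\geq r/2$, bypasses all of that: it is purely metric, needs no connectedness or convexity, and your closing remark that convexity plays no essential role is accurate --- your version of the lemma holds verbatim in an arbitrary metric space. The only thing the paper's longer route buys is consistency of style with the segment-based arguments used elsewhere (e.g.\ Lemma \ref{lem:segment}); as a proof of this lemma yours is strictly simpler and slightly more general.
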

\begin{proof}
The set $B(P,r/2)$ is contained in
$\{x\in X: d(x,P)<d(x,A)\}$ because if $x\in B(P,r/2)$ and $a\in A$, then $r\leq d(a,P)\leq d(a,x)+d(x,P)<d(a,x)+r/2$, so $d(x,P)<r/2\leq d(x,A)$.

Now let $x\in \dom(A,P)$. Since $d(x,A)\leq d(x,P)$, this point does not
belong to $B(P,r/2)$
by the above paragraph. Let $p\in P$ be arbitrary, and consider the line segment $[x,p]$. It intersects the boundary of $B(P,r/2)$ (otherwise the connected space $[x,p]$ would have a decomposition as a union of two disjoint open sets) at some point $y$, and since $B(P,r/2)$ is open, it follows that $d(y,P)\geq r/2$. Hence $d(x,p)=d(x,y)+d(y,p)\geq r/2$, i.e., $d(x,P)\geq r/2$, as claimed.
\end{proof}

We finish with the following lemma, the proof of which is a simple
consequence of the definition.
\begin{lem}\label{lem:GeneralDistance}
The equality $\dom(\bigcup_{i=1}^n P_i,A)=\bigcup_{i=1}^n\dom(P_i,A)$ holds for any nonempty subsets $P_1,\ldots,P_n$ and $A$ of the metric space $X$.
\end{lem}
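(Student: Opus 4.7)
The plan is to reduce the identity to the elementary metric fact that the distance from a point to a finite union of sets equals the minimum of the distances to the component sets. Concretely, I would first observe that
\begin{equation*}
d\Bigl(x,\bigcup_{i=1}^n P_i\Bigr)
  = \inf\Bigl\{d(x,p) : p \in \textstyle\bigcup_{i=1}^n P_i\Bigr\}
  = \min_{1\leq i\leq n} d(x,P_i),
\end{equation*}
where the second equality uses that the infimum of a finite union of subsets of $\R$ is the minimum of the partial infima. Since $n$ is finite this minimum is attained, though even without attainment the same equivalence below would go through with ``there exists $i$'' in place of ``the minimizing $i$''.

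Next I would simply unfold Definition \ref{def:dom} on both sides. A point $x\in X$ lies in $\dom(\bigcup_{i=1}^n P_i,A)$ if and only if $\min_{1\leq i\leq n} d(x,P_i) \leq d(x,A)$, which, since the minimum is over finitely many indices, is equivalent to the existence of some index $i_0$ with $d(x,P_{i_0})\leq d(x,A)$. By Definition \ref{def:dom} again, this last condition is precisely the statement that $x\in \dom(P_{i_0},A)$ for some $i_0$, i.e., $x\in \bigcup_{i=1}^n \dom(P_i,A)$. Both inclusions follow at once from this chain of equivalences.

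There is essentially no obstacle, as the lemma is purely definitional and uses neither the norm structure of the ambient space nor any convexity or completeness hypothesis; the only point requiring (trivial) care is the passage from $\inf$ to $\min$, which is licensed by the finiteness of the family. The argument in fact gives the analogous identity for any nonempty family of sites, but the finite version is all that is invoked in the later sections.
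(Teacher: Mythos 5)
Your proof is correct and is precisely the argument the paper has in mind: the paper gives no proof at all, describing the lemma as ``a simple consequence of the definition,'' and your unfolding via $d(x,\bigcup_{i=1}^n P_i)=\min_{1\leq i\leq n}d(x,P_i)$ is exactly that definitional argument. One caveat: your closing remark that the identity extends to ``any nonempty family of sites'' is false for infinite families, since then the infimum over the index set need not be attained; for instance, if $d(x,A)=1$ and $d(x,P_i)=1+1/i$ for $i=1,2,\ldots$, then $x\in\dom(\bigcup_i P_i,A)$ but $x\notin\dom(P_i,A)$ for every $i$, so only the inclusion $\bigcup_i\dom(P_i,A)\subseteq\dom(\bigcup_i P_i,A)$ survives in general. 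This does not affect the finite case actually asserted in the lemma, where the minimum over the finitely many values $d(x,P_1),\ldots,d(x,P_n)$ is always attained at some index.
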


\section{The space homeomorphic to the Hilbert cube}\label{sec:HilbertCube}
The main result of this section is Proposition \ref{prop:HilbertCube} below. It is based on Lemma ~\ref{lem:LocConnected} which shows that $\dom(p,A)$ is locally path connected whenever $d(p,A)>0$, and on Lemma  \ref{lem:FiniteConnected} which generalizes this result to $\dom(P,A)$ for any finite set $P$. As Lemma \ref{lem:segment} shows, $\dom(p,A)$ is star-shaped. However, this property by itself is not sufficient for concluding that it is locally (path) connected, since there are simple examples  of star-shaped sets in $\R^2$ which are not locally connected. In fact, a result of T. Zamfirescu \cite[Theorem 4]{Zamfirescu} shows that a large  class (in the sense of Baire category) of star-shaped sets in $\R^n$ are not locally connected. 

As a result, the local path connectedness of $\dom(p,A)$ must be proved. It has turned out that the proof, given in Lemma \ref{lem:LocConnected}, is somewhat technical, and in a special but important case, namely Lemma \ref{lem:homeomorphism} below, a simpler proof can be presented. This special case is of interest in itself, and its proof also casts some light on the strategy for proving Lemma \ref{lem:LocConnected}. The condition on $p$ and $s$ described in  Lemma \ref{lem:homeomorphism} holds, for instance, when $p$ is in the interior of $X$ relative to the closed affine hull spanned by it. See also the short discussion before Lemma \ref{lem:B} regarding the set $B_p$. 

\begin{lem}\label{lem:homeomorphism}
Let $X$ be a closed and convex subset of a uniformly convex normed space. Let $p\in X$. Suppose that there exists some $s>0$ such that $p+s\theta\in X$ for each $\theta\in \Theta_p$. Suppose also that $p$ has the emanation property. Let $\emptyset \neq A\subset X$ be such that $d(p,A)>0$. Suppose also that condition \eqref{eq:BallRho} holds. Then $\dom(p,A)$ is homeomorphic to the set $B_p=\{r\theta: r\in [0,1],\,\theta\in \Theta_p\}$, and, in particular, $\dom(p,A)$ is path connected and locally path connected.
\end{lem}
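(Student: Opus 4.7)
The plan is to construct an explicit homeomorphism $h : B_p \to \dom(p,A)$ based on the representation of dominance regions in Theorem \ref{thm:domInterval}. Define
\[
h(0) = p, \qquad h(b) = p + T(b/|b|, p)\, b \quad \text{for } b \in B_p \setminus \{0\}.
\]
If $b = r\theta$ with $\theta \in \Theta_p$ and $r \in (0, 1]$, then $h(b) = p + r T(\theta, p) \theta$. Since $r T(\theta, p) \in [0, T(\theta, p)]$, Theorem \ref{thm:domInterval} shows $h(b) \in \dom(p, A)$, and the same theorem yields surjectivity. Injectivity is immediate once we know $T(\theta,p)>0$ on $\Theta_p$, because then comparing the norms and the directions of $h(b_1)-p$ and $h(b_2)-p$ forces $b_1=b_2$.

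The key quantitative step is a uniform positive lower bound on $T(\cdot, p)$. Set $c := \min\{s,\, d(p, A)/2\} > 0$; I claim $T(\theta, p) \geq c$ for every $\theta \in \Theta_p$. Indeed, for $t \in [0, s]$ the hypothesis $p + s\theta \in X$ and the convexity of $X$ give $p + t\theta \in X$, while for $t \in [0, d(p,A)/2)$ Lemma \ref{lem:DistanceToP} gives $d(p + t\theta, p) = t < d(p + t\theta, A)$. Hence the supremum in \eqref{eq:Tdef} is at least $c$.

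Continuity of $h$ on $B_p\setminus\{0\}$ follows from continuity of $T(\cdot, p)$ on $\Theta_p$ (Theorem \ref{thm:T}) together with the continuity of $b\mapsto b/|b|$. Continuity of $h$ at $0$ uses the upper bound $T \leq \rho$ (also from Theorem \ref{thm:T}): $|h(b) - p| \leq \rho |b| \to 0$. For the inverse, $h^{-1}(x) = (x-p)/T((x-p)/|x-p|,\, p)$ with $h^{-1}(p) = 0$; continuity at $x \neq p$ again follows from Theorem \ref{thm:T}, while continuity at $p$ uses the lower bound from the previous paragraph: $|h^{-1}(x)| \leq |x - p|/c \to 0$ as $x \to p$. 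The main obstacle in the argument is precisely this lower bound on $T$: without the hypothesis on $s$, directions $\theta\in\Theta_p$ for which $p+t\theta$ leaves $X$ almost immediately would force $T(\theta,p)$ to be arbitrarily small, and $h^{-1}$ would fail to be continuous at $p$.

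Finally, $B_p$ is convex by Lemma \ref{lem:B}, hence path-connected and locally path-connected; both properties transfer via the homeomorphism $h$ to $\dom(p, A)$, completing the proof.
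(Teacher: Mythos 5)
Your proposal is correct and follows essentially the same route as the paper: the explicit map $b\mapsto p+T(b/|b|,p)\,b$, surjectivity from Theorem \ref{thm:domInterval}, a uniform lower bound $T(\cdot,p)\geq\min\{s,d(p,A)/2\}$ (the paper uses $\min\{s/2,d(p,A)/2\}$ via a direct triangle-inequality computation rather than Lemma \ref{lem:DistanceToP}, an immaterial difference), continuity of $T$ and its boundedness by $\rho$ from Theorem \ref{thm:T}, and convexity of $B_p$ from Lemma \ref{lem:B}. Your explicit treatment of continuity at the exceptional points $0$ and $p$ is a welcome extra detail, but the argument is the paper's.
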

\begin{proof}
Let $f:B_p \to \dom(p,A)$ be defined by
$f(r\theta)=p+rT(\theta,p)\theta$,
i.e., $f(0)=p$ and $f(x)=p+T(x/|x|,p)x$ for $x\neq 0$, where $T(\theta,p)$
is defined in \eqref{eq:Tdef}. Let
$\epsilon=\min\{d(p,A)/2,s/2\}$. Let $\theta\in \Theta_p$ and
$t\in [0,\epsilon]$ be arbitrary. Then for $x=p+t\theta$ we have $x\in X$
because
$t\leq s$ and $X$ is convex. In addition, $2\epsilon \leq d(p,A)\leq
d(p,x)+d(x,A)\leq \epsilon+d(x,A)$, so $d(x,p)\leq \epsilon\leq d(x,A)$
and hence $x\in \dom(p,A)$. Thus $T(\theta,p)\geq \epsilon$
by the definition of $T$. From Theorem \ref{thm:T} it
follows that $T$ is bounded, so $f$ is well defined. By
Theorem \ref{thm:domInterval}, $f$ is onto, and it is one-to-one by a
direct calculation.  If $y=p+rT(\theta,p)\theta$, then $\theta=(y-p)/|y-p|$ and $r=|y-p|/T(\theta,p)$, so the inverse function is defined by
\begin{equation*}
f^{-1}(y)=\left\{\begin{array}{ll}
\frac{y-p}{T((y-p)/|y-p|,p)} & y\neq p\\
0 & y=p.
\end{array}\right.
\end{equation*}
It now follows from Theorem \ref{thm:T} that both $f$ and
$f^{-1}$ are continuous. Since $B_p$ is convex by Lemma \ref{lem:B}, it is path connected and locally path connected, so $\dom(p,A)$ is both path
connected and locally path connected, as claimed.
\end{proof}
\begin{figure}
\begin{minipage}[t]{0.38\textwidth}
\begin{center}
{\includegraphics[scale=0.8]{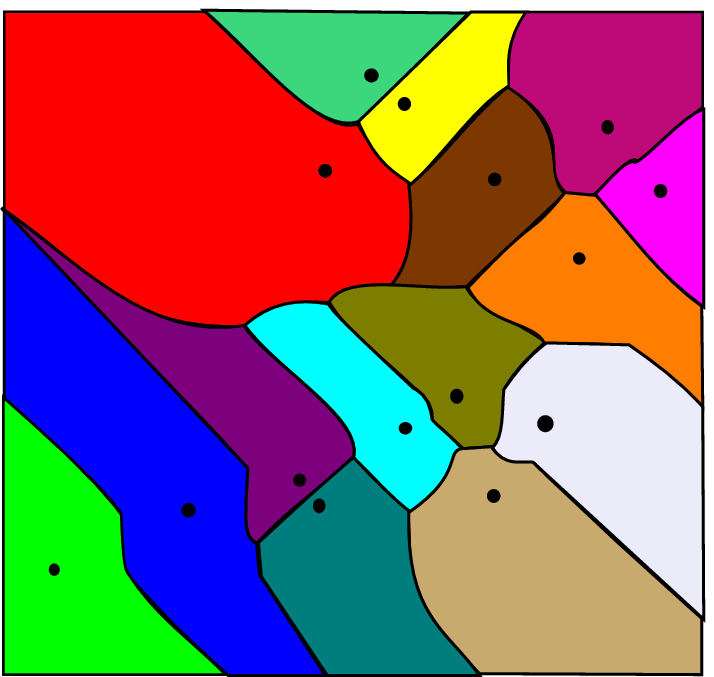}}
\end{center}
 \caption{An example related to Lemma \ref{lem:homeomorphism} in a square in $(\R^2,\ell_{7})$. Here $p=p_k$, $A$ is the collection of the other singleton sites $p_j,
 j\neq k$, and $\dom(p,A)$ is the Voronoi cell of $p_k$. The set $B_p$ is simply the unit ball.}
\label{fig:LemmaBall}
\end{minipage}%
\hfill
\begin{minipage}[t]{0.38\textwidth}
\begin{center}
{\includegraphics[scale=0.5]{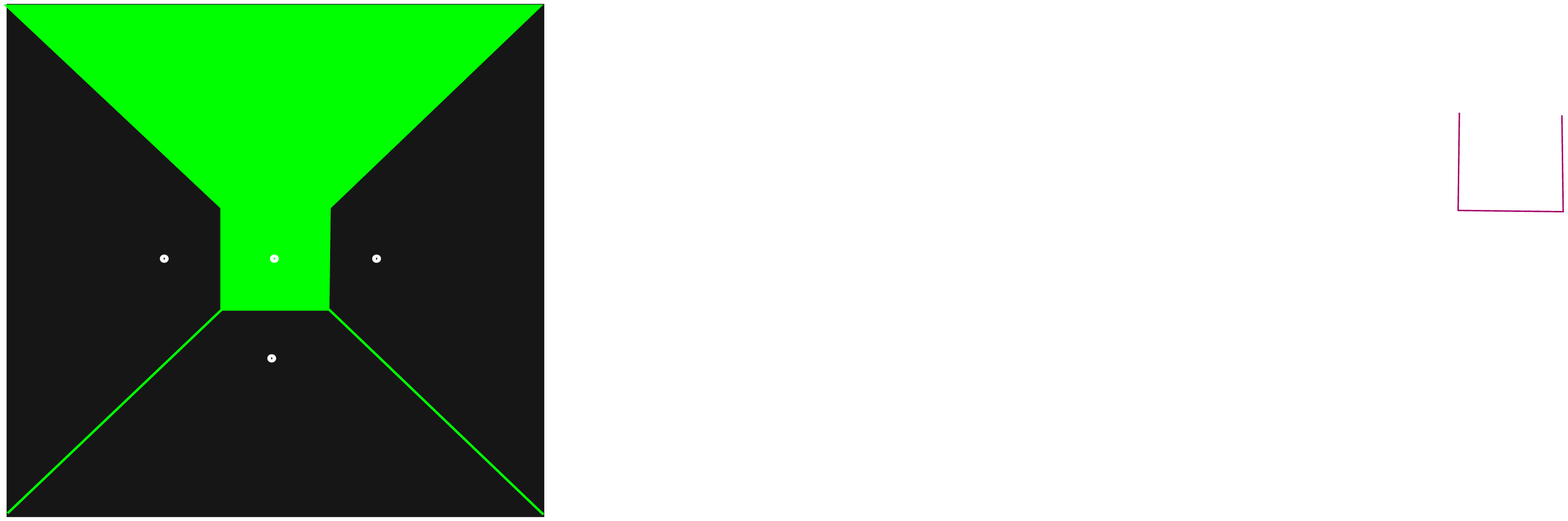}}
\end{center}
 \caption{An example where the conclusion of Lemma \ref{lem:homeomorphism} fails if the uniform convexity assumption is removed. Here $X$ is a square in $(\R^2,\ell_{\infty})$, $p=(0,0)$, and $A=\{(2,0),(-2,0),(0,-2)\}$. The green region is $\dom(p,A)$.}
\label{fig:InStability000} \label{page:InStability000}
\end{minipage}%
\end{figure}
An example related to Lemma \ref{lem:homeomorphism} is given in Figure \ref{fig:LemmaBall}. This may be somewhat surprising, but in general the conclusion of Lemma \ref{lem:homeomorphism} is not true, and Figure \ref{fig:InStability000} presents a
counterexample in the non-uniformly convex space $(\R^2,\ell_{\infty}$) with two simple sets. 

An examination of the above proof suggests that problems can appear if no  positive number $s$ satisfies the condition in the formulation of Lemma \ref{lem:homeomorphism}, because in this case the function $f^{-1}$ may not be continuous. Such a phenomenon occurs in infinite dimensional spaces, for instance, when $X$ is the Hilbert cube $I^{\infty}$ in $\ell_2$, but cannot happen if $p$ is in the interior of $X$ relative to the closed affine hull spanned by it. In order to replace Lemma \ref{lem:homeomorphism} one has to show directly by other arguments that $\dom(p,A)$ is connected
and locally connected. This will be done in Lemma \ref{lem:LocConnected}  below.

\begin{lem}\label{lem:LocConnected} 
Let $X$ be a closed and convex set in a uniformly convex normed space.
Let  $p\in X$ and $\emptyset \neq A\subset X$ and suppose that $d(p,A)>0$.
Suppose also that condition \eqref{eq:BallRho} holds and that $p$ has the emanation property. Then $\dom(p,A)$ is path connected and locally path  connected.
\end{lem}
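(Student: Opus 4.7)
Path-connectedness is immediate: the distance from every point of $X$ to the singleton $\{p\}$ is attained (at $p$), so Theorem~\ref{thm:domInterval} expresses $\dom(p,A)$ as the union $\bigcup_{\theta\in\Theta_p}[p,\,p+T(\theta,p)\theta]$, a star-shaped set with centre $p$, hence path-connected.

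For local path-connectedness I plan to invoke Lemma~\ref{lem:WeakStrongConnect} and verify weak local path-connectedness at an arbitrary $x\in\dom(p,A)$. Given an open neighborhood $U$ of $x$, if $|x-p|<d(p,A)/4$ then for $\varepsilon>0$ sufficiently small Lemma~\ref{lem:DistanceToP} yields $B(x,\varepsilon)\cap X\subseteq B(p,d(p,A)/2)\cap X\subseteq\dom(p,A)$, a convex (hence path-connected) open neighborhood of $x$ in $\dom(p,A)$ contained in $U$. The substantive case is $|x-p|\ge d(p,A)/4$, where I write $x=p+t\theta$ with $t=|x-p|>0$ and $\theta\in\Theta_p$, pick $\varepsilon\in(0,\min\{t,d(p,A)/6\})$, and apply Theorem~\ref{thm:T} (whose hypotheses are met by the emanation property at $p$ and condition~\eqref{eq:BallRho}) to obtain $\Delta\in(0,1)$ such that $T(\phi,p)\ge T(\theta,p)-\varepsilon\ge t-\varepsilon$ whenever $\phi\in\Theta_p$ and $|\phi-\theta|<\Delta$; here $t\le T(\theta,p)$ because $x\in\dom(p,A)$.

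Define
\begin{equation*}
V=\Bigl\{y\in\dom(p,A)\setminus\{p\}:\ \Bigl|\tfrac{y-p}{|y-p|}-\theta\Bigr|<\tfrac{\Delta}{3}\ \text{and}\ |y-p|>t-\varepsilon\Bigr\},
\end{equation*}
which is open in $\dom(p,A)$ and contains $x$. For each $y=p+r_y\phi_y\in V$, build a three-piece path $\gamma_y$ from $y$ to $x$: (i)~the radial segment $[y,\,p+(t-\varepsilon)\phi_y]$; (ii)~the angular arc $s\mapsto p+(t-\varepsilon)\phi_y(s)$ with $\phi_y(s)=((1-s)\phi_y+s\theta)/|(1-s)\phi_y+s\theta|$; and (iii)~the radial segment $[p+(t-\varepsilon)\theta,\,x]$. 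Let $C$ be the union of $V$ with the images of all these $\gamma_y$. Then $C$ is path-connected (every point of $C$ lies on some $\gamma_y$ ending at $x$) and $V\subseteq C$, so weak local path-connectedness at $x$ follows once $C\subseteq U\cap\dom(p,A)$ is verified.

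The main obstacle is controlling arc~(ii). Arcs~(i) and~(iii) lie in $\dom(p,A)$ via Lemma~\ref{lem:segment}, since their directions $\phi_y$ and $\theta$ are in $\Theta_p$ and their radii never exceed $T(\phi_y,p)$ or $T(\theta,p)=t$. For arc~(ii), Lemma~\ref{lem:B} first gives $\phi_y(s)\in\Theta_p$ (the cone generated by $\Theta_p$ is convex), and then the elementary estimate
\begin{equation*}
|\phi_y(s)-\theta|\le\frac{2(1-s)|\phi_y-\theta|}{1-(1-s)|\phi_y-\theta|}<\frac{2\Delta/3}{1-\Delta/3}<\Delta
\end{equation*}
(valid since $\Delta<1$) keeps $\phi_y(s)$ inside the cone where Theorem~\ref{thm:T} is available; hence $T(\phi_y(s),p)\ge t-\varepsilon$ and arc~(ii) lies in $\dom(p,A)$ by Theorem~\ref{thm:domInterval}. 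The factor-of-three slack between $V$ and the outer bound $\Delta$ is precisely what makes this estimate close, and this is the only place where uniform convexity is used (through Theorem~\ref{thm:T}). Finally, every $z\in C$ satisfies $|z-x|\le\varepsilon+t\Delta$ by the triangle inequality, so $C\subseteq U$ once $\varepsilon$ and $\Delta$ are taken small enough. Lemma~\ref{lem:WeakStrongConnect} then upgrades weak local path-connectedness to local path-connectedness, completing the proof.
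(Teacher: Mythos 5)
Your route is genuinely different from the paper's: where the paper pushes a convex neighborhood $B(x'_0,\delta_1)\cap B_p$ forward under the map $x'\mapsto p+T(x'/|x'|,p)x'$ and then exhibits a local inverse $g$ to show the image contains an open set, you build explicit three-piece paths (radial--angular--radial) inside a truncated cone shell. Most of your steps check out: the star-shapedness via Theorem~\ref{thm:domInterval}, the reduction via Lemma~\ref{lem:WeakStrongConnect}, the use of Lemma~\ref{lem:B} to keep the normalized convex combinations $\phi_y(s)$ inside $\Theta_p$, the estimate $|\phi_y(s)-\theta|<\Delta$, and the application of Theorem~\ref{thm:T} (whose hypotheses you verify) are all correct, as is the containment of arcs (i) and (iii) in $\dom(p,A)$ via Lemma~\ref{lem:segment}.

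The final containment $C\subseteq U$, however, fails as written. Your $V$ imposes only the lower bound $|y-p|>t-\varepsilon$ on the radius, so $V$ (hence $C$) contains every point $p+r\phi$ of $\dom(p,A)$ with $|\phi-\theta|<\Delta/3$ and $r$ as large as $T(\phi,p)$. If $x$ is not the outer endpoint of its radial segment, i.e.\ $t<T(\theta,p)$, then $V$ contains points at distance roughly $T(\theta,p)-t$ from $x$, a quantity that does not shrink as $\varepsilon$ and $\Delta$ do; the claimed bound $|z-x|\le\varepsilon+t\Delta$ is therefore false for such $z$ (and for points of arc~(i), whose outer radius $r_y$ is not controlled by $t$). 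So for a small neighborhood $U$ you have not produced a path-connected $C\subseteq U$, and weak local path-connectedness at $x$ is not established. The repair is immediate: add the upper bound $|y-p|<t+\varepsilon$ to the definition of $V$. Then $V$ is still open in $\dom(p,A)$ and contains $x$, arcs (i)--(iii) are unchanged, and every $z\in C$ satisfies $|z-x|\le(t+\varepsilon)\Delta+2\varepsilon$, which can be made smaller than the radius of $U$. With that one-line correction your argument becomes a valid, and arguably more hands-on, alternative to the paper's proof.
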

\begin{proof}
By Lemma \ref{lem:segment}, any two points $x_1,x_2\in \dom(p,A)$
can be connected via $p$, so $\dom(p,A)$ is path connected. In order
to prove that
$\dom(p,A)$ is locally path connected it suffices by Lemma \ref{lem:WeakStrongConnect} to show that it is weakly locally path connected. Let $x_0\in \dom(p,A)$. If $x_0=p$, then let $V=B(x_0,r)\cap X$ for $r=d(p,A)/3$. Given $y\in V$, the triangle inequality shows that $3r=d(p,A)\leq d(p,y)+d(y,A)\leq r+d(y,A)$. Hence $d(y,p)\leq r<2r\leq d(y,A)$ and $y\in \dom(p,A)$, so $V\subseteq \dom(p,A)$. Since $V$ is convex, $x_0=p$ has a path connected open neighborhood.

Now suppose that $x_0\neq p$, and for any $\epsilon>0$ let $U=B(x_0,\epsilon)\cap \dom(p,A)$ be any neighborhood of $x_0$. Let $\theta_0=(x_0-p)/|x_0-p|$. Then $|\theta_0|=1$, and since $x_0=p+|x_0-p|\theta_0\in \dom(p,A)$, the definition of $T(\theta_0,p)$ (in \eqref{eq:Tdef}) implies that $d(x_0,p)\leq T(\theta_0,p)$. Hence $x'_0:=(x_0-p)/T((x_0-p)/|x_0-p|,p)=(|x_0-p|/T(\theta_0,p))\theta_0$
is well defined, different from 0 and belongs to $B_p=\{r\theta: r\in [0,1],\,|\theta|=1,\,\,p+t\theta\in X \,\textnormal{for some}\,\, t>0\}$. In addition, $|x'|>0$ for each $x'\in B(x'_0,|x'_0|/2)\cap B_p$.

Define $f:B(x'_0,|x'_0|/2)\cap B_p\to \dom(p,A)$ by
$f(x')=p+T(x'/|x'|,p)x'$. Since $T(\cdot,p)$ is continuous (Theorem 
\ref{thm:T}) and since $f(x'_0)=x_0$, it follows
that $f$ is continuous, so for the given $\epsilon>0$, there exists some
$\delta_1\in (0,|x'_0|/2)$ such that if $|x'-x'_0|<\delta_1$,  then
$|f(x')-f(x'_0)|<\epsilon$. Hence $f(U'_1)\subset  U$ for
$U'_1=B(x'_0,\delta_1)\cap B_p$. Since $U'_1$ is a path connected (in
fact, convex) open set containing $x'_0$, the continuity of $f$ implies
that $f(U'_1)$ is a path connected set contained in $U$ and containing
$x_0=f(x'_0)$. In order to establish the weak (path) connectedness of $\dom(p,A)$, it suffices to show that $f(U'_1)$ contains an open neighborhood of $x_0$.

Since $T(\theta_0,p)>0$, the continuity of $T(\cdot,p)$ implies that  there exists $\delta_2\in (0,1)$ such  that if $\theta\in \Theta_p$ and  $|\theta-\theta_0|<\delta_2$, then $T(\theta,p)>0$.
 Let $\delta_3=\delta_2|x_0-p|/2$. Given  $x\in B(x_0,\delta_3)\cap \dom(p,A)$, we have
\begin{multline}\label{eq:ThetaCont}
\left|\frac{x-p}{|x-p|}-\frac{x_0-p}{|x_0-p|}\right|=
\frac{\left||x_0-p|(x-p)-|x-p|(x_0-p)\right|}{|x-p||x_0-p|}=\\
\frac{\left|(|x_0-p|-|x-p|)(x-p)+|x-p|((x-p)-(x_0-p))\right|}{|x-p||x_0-p|}
\leq \frac{2|x_0-x|}{|x_0-p|}<\delta_2.
\end{multline}
Thus $T((x-p)/|x-p|,p)>0$ for each $x\in B(x_0,\delta_3)\cap \dom(p,A)$.
For each such $x$, let $g(x)=(x-p)/T((x-p)/|x-p|,p)$. Since $g$ is
continuous, for $\delta_1$ from the definition of $U'_1$ there exists
$\delta_4\in (0,\delta_3)$ such that if $|x-x_0|<\delta_4$, then
$|g(x)-g(x_0)|<\delta_1$. But $g(x_0)=x'_0$ and $g(x)\in U'_1$ for each
$x$ in the set $V:=B(x_0,\delta_4)\cap\dom(p,A)$. In addition, by the
definition of $f$ and $g$ we have $x=f(g(x))$ for each $x\in V$ (just let
$\theta=(x-p)/|x-p|$ and $x'=g(x)$, and note that $x'/|x'|=\theta$). Hence
$V$ is an open neighborhood of $x_0$ which is contained in $f(U'_1)$, so
$\dom(p,A)$ is weakly locally path connected and hence locally path connected by Lemma \ref{lem:WeakStrongConnect}.
\end{proof}

\begin{lem}\label{lem:FiniteConnected}
Let $X$ be a closed and convex subset of a uniformly convex normed space.
Let
$P=\{p_1,\ldots, p_m\}\subset X$ and assume that $\emptyset\neq A\subset X$ satisfies $d(P,A)>0$. Suppose also that 
condition \eqref{eq:BallRho} holds and that the subset $P$ has the emanation property. Then $\dom(P,A)$ is locally path connected.
\end{lem}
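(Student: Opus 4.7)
The plan is to reduce this to the single-point case already handled by Lemma~\ref{lem:LocConnected}, and then to patch the pieces together by the finite-union Lemma~\ref{lem:FiniteLocConnect}. The decisive identity is supplied by Lemma~\ref{lem:GeneralDistance}, which lets me write
\begin{equation*}
\dom(P,A)=\bigcup_{i=1}^{m}\dom(p_i,A).
\end{equation*}
So the strategy is: verify that each summand $\dom(p_i,A)$ is a closed, locally path connected subset of the metric space $\dom(P,A)$, and then invoke Lemma~\ref{lem:FiniteLocConnect}.

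First, I would check that the hypotheses of Lemma~\ref{lem:LocConnected} transfer to each individual $p_i$. Since $\{p_i\}\subseteq P$ and $A$ is fixed, we have $d(p_i,A)\geq d(P,A)>0$. By assumption the finite set $P$ has the emanation property, so in particular every $p_i$ does; and condition~\eqref{eq:BallRho} is a property of the pair $(X,A)$ alone, hence unchanged. Therefore Lemma~\ref{lem:LocConnected} applies and gives that $\dom(p_i,A)$ is (path connected and) locally path connected for each $i$.

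Next, each $\dom(p_i,A)$ is closed in $X$ (and hence closed in $\dom(P,A)$ with the subspace metric), because the function $x\mapsto d(x,A)-d(x,p_i)$ is continuous on $X$ and $\dom(p_i,A)$ is the preimage of the closed ray $[0,\infty)$ under this function, intersected with the closed set $X$. With this in hand, $Z:=\dom(P,A)$ is the union of finitely many closed, locally path connected subspaces $Z_i:=\dom(p_i,A)$, so Lemma~\ref{lem:FiniteLocConnect} yields that $Z$ is locally path connected, which is the desired conclusion.

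I do not anticipate a serious obstacle here: the substantive work (the continuity of $T(\cdot,p)$ via uniform convexity and the emanation property, and the delicate weak-local-path-connectedness argument around a generic point of $\dom(p,A)$) has already been absorbed into Lemma~\ref{lem:LocConnected}, and the topological glueing is exactly the content of Lemma~\ref{lem:FiniteLocConnect}. The only thing to be mildly careful about is applying Lemma~\ref{lem:FiniteLocConnect} inside the metric space $\dom(P,A)$ rather than in $X$; since the $\dom(p_i,A)$ are closed in $X$, their intersections with $\dom(P,A)$ remain closed in the subspace topology and each inherits the locally path connected property, so the application is legitimate.
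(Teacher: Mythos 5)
Your proposal is correct and follows essentially the same route as the paper's own proof: decompose $\dom(P,A)=\bigcup_{i=1}^m\dom(p_i,A)$ via Lemma~\ref{lem:GeneralDistance}, apply Lemma~\ref{lem:LocConnected} to each piece, and glue with Lemma~\ref{lem:FiniteLocConnect}, including the same remark about the subspace topology. No issues.
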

\begin{proof}
This assertion is a simple consequence of the facts that
$\dom(P,A)=\bigcup_{i=1}^m\dom(p_i,A))$ (by Lemma \ref{lem:GeneralDistance}),  that $\dom(p_i,A)$ is locally path connected (by Lemma
\ref{lem:LocConnected}) and closed for each $i$, and the fact that the metric space $\dom(P,A)$ is a finite union of locally path connected closed sets and hence locally path connected by Lemma \ref{lem:FiniteLocConnect}. Note that the topology of $\dom(p_i,A)$, which is induced by the norm of the space,  coincides with its topology as a subspace of $\dom(P,A)$, and hence we can apply Lemma \ref{lem:FiniteLocConnect}. 
\end{proof}

\begin{prop}\label{prop:HilbertCube}
Let $X$ be a compact and convex subset of a uniformly convex normed space,
and let  $(P_k)_{k\in K}$ be a finite tuple of finite sets in $X$
which are pairwise disjoint. Suppose that for each $k\in K$ the site $P_k$ has the  emanation property. For each $k\in K$, let $A_k=\bigcup_{j\neq
k}P_j$ and  $Q_k=\dom(P_k,A_k)$. Let $Y_k=\{C: \,C\,\,\textnormal{is closed}\,\,\textnormal{and}\,\, P_k\subseteq C\subseteq Q_k \}$, endowed with the Hausdorff metric. Let $Y=\prod_{k\in K}Y_k$, endowed with the uniform Hausdorff metric. Then $Y\approx I^{\infty}$.
\end{prop}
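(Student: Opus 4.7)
The plan is to apply the Curtis--Schori theorem (Theorem \ref{thm:CurtisSchori}) to reduce each $Y_k$ to a finite product of Hilbert cubes, and then to use the fact that a finite product of Hilbert cubes is again a Hilbert cube. First I will verify that each $Q_k$ is a compact, locally path connected metric space. Compactness is clear since $Q_k$ is a closed subset of the compact $X$ (closedness follows from the continuity of $d(\cdot,P_k)-d(\cdot,A_k)$). Local path connectedness follows from Lemma \ref{lem:FiniteConnected}, whose hypotheses are easy to check in this setting: pairwise disjointness of the finite sets $P_k$ and $A_k$ gives $d(P_k,A_k)>0$; compactness of $X$ makes condition \eqref{eq:BallRho} trivial (take $\rho=\mathrm{diam}(X)+1$); and the emanation property of $P_k$ is an explicit hypothesis.

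The main obstacle is that, unlike in the singleton-site setting of Asano et al.\ (where each $Q_k$ is convex), $Q_k$ need not be connected, so Curtis--Schori cannot be applied directly to $(Q_k,P_k)$. I will address this by decomposing $Q_k$ into its connected components. By Lemma \ref{lem:GeneralDistance} we have $Q_k=\bigcup_{i=1}^m\dom(p_i,A_k)$, and each $\dom(p_i,A_k)$ is path connected, being star-shaped from $p_i$ by Lemma \ref{lem:segment}. Consequently $Q_k$ has at most $m$ connected components $C_1,\dots,C_n$, and each $C_j$ is a union of some of the $\dom(p_i,A_k)$'s, so $P_k^{(j)}:=P_k\cap C_j$ is nonempty. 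Each $C_j$ is then a Peano continuum: compact as a closed subset of $Q_k$, connected by definition, and locally connected because it is clopen in the locally connected space $Q_k$ (local connectedness is inherited by open subsets). Moreover $P_k^{(j)}\subsetneq C_j$: by Lemma \ref{lem:DistanceToP} a relative neighborhood of each $p_i$ lies in $Q_k$, and since $X$ is convex and not a singleton, $p_i$ is not isolated in $X$, so $C_j$ strictly contains the finite set $P_k^{(j)}$.

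Curtis--Schori applied to each pair $(C_j,P_k^{(j)})$ yields $Y_k^{(j)}:=\{D: P_k^{(j)}\subseteq D\subseteq C_j,\ D\text{ closed}\}\approx I^{\infty}$ under the Hausdorff metric. Next I will show that the natural map $\Phi:Y_k\to\prod_{j=1}^n Y_k^{(j)}$ defined by $\Phi(C)=(C\cap C_j)_{j=1}^n$, with inverse $(D_j)\mapsto\bigcup_j D_j$, is a homeomorphism. Since the finitely many closed pairwise disjoint sets $C_j$ in the compact space $Q_k$ have a positive mutual distance $\delta>0$, a short triangle-inequality argument shows that whenever $D(C,C')<\delta$, any point of $C\cap C_j$ has its nearest point in $C'$ lying in $C'\cap C_j$, and vice versa; hence $D(C,C')=\max_j D(C\cap C_j,C'\cap C_j)$ on this neighborhood, so $\Phi$ is bicontinuous.

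Finally, since a finite product of Hilbert cubes is itself a Hilbert cube (by interlacing coordinates in $\prod_{n=1}^{\infty}[0,1/n]$, or as a standard consequence of the definition), each $Y_k\approx\prod_{j=1}^n I^{\infty}\approx I^{\infty}$. The uniform Hausdorff metric $\wt{D}$ on the finite product $Y=\prod_{k\in K}Y_k$ induces the product topology, so $Y$ is also a finite product of spaces homeomorphic to $I^{\infty}$, and thus $Y\approx I^{\infty}$. The essential technical point beyond the cited results is the componentwise decomposition and the verification that the Hausdorff metric respects it; everything else is bookkeeping.
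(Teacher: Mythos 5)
Your proposal is correct and follows essentially the same route as the paper: both decompose $Q_k$ into its (path) connected components, identify each component as $\dom(E,A_k)$ for the sub-site $E=P_k\cap C_j$, verify the Peano continuum and properness hypotheses via Lemmata \ref{lem:FiniteConnected} and \ref{lem:DistanceToP}, apply Curtis--Schori componentwise, and use the positive separation of the components to show $Y_k$ is homeomorphic to the finite product of the resulting Hilbert cubes. The only cosmetic difference is that you work with connected components and deduce their openness from the local connectedness of $Q_k$, whereas the paper works directly with path components and applies Lemma \ref{lem:FiniteConnected} to each component; since $Q_k$ is locally path connected these coincide.
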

\begin{proof}
Given $k\in K$ and $p\in P_k$, let $C_p$ be the path connected 
component of $p$ in $Q_k$, and let $E_p=\{q\in P_k: q\in C_p\}$. Since $d(P_k,A_k)>0$ for each $k$ by assumption, we have $d(q,A_k)>0$ for any $q\in E_p$, so $\dom(q,A_k)$ is path   connected by Lemma \ref{lem:LocConnected}. Thus $\dom(q,A_k)\subseteq C_p$ for each $q\in E_p$, and hence $\dom(E_p,A_k)\subseteq C_p$ by Lemma \ref{lem:GeneralDistance}. On the other hand, if $x\in C_p$, then $x\in \dom(p',A_k)$ for some $p'\in P_k$, because $Q_k=\bigcup_{p\in P_k}\dom(p,A_k)$ by Lemma \ref{lem:GeneralDistance}. Lemma  \ref{lem:LocConnected} implies that there is a path between $p'$ and $x$, and since $x\in C_p$, there is a path between $x$ and $p$. Thus $p'\in E_p$, so $x\in \dom(p',A_k)\subseteq \dom(E_p,A_k)$ and hence $\dom(E_p,A_k)=C_p$.
 
Since for each $p_1,p_2\in P_k$, either 
$E_{p_1}=E_{p_2}$ or $E_{p_1}\cap E_{p_2}=\emptyset$, 
it follows that $\{E_p : p\in P_k\}=\{E_{k,1},\ldots,E_{k,m_k}\}$ for some 
disjoint sets $E_{k,l}, \,l=1,\ldots, m_k$, the union of which is $P_k$. Hence the path connected components of $Q_k$ are $\dom(E_{k,1},A_k),\ldots,\dom(E_{k,m_k},A_k)$, and since they are closed and disjoint in the compact set $X$, they are, in fact, compact and positively separated. Since  $E_{k,l}\subseteq P_k$, we have $d(E_{k,l},A_k)\geq d(P_k,A_k)>0$ for each $k$ and each $l$. Hence, by Lemma  \ref{lem:FiniteConnected}, the sets $\dom(E_{k,l},A_k)$ are also locally path connected.

By Lemma  \ref{lem:DistanceToP}, 
we have $E_{k,l}\varsubsetneqq\dom(E_{k,l},A_k)$ for each $l=1,\ldots,m_k$  
and each $k$, because there are points $x$ with $d(x,E_{k,l})=r/4$ 
for $r=d(E_{k,l},A_k)$ and each such point is in $\dom(E_{k,l},A_k)$, 
but not in $E_{k,l}$. Consequently, Theorem ~\ref{thm:CurtisSchori} 
implies that  $2^{\dom(E_{k,l},A_k)}_{E_{k,l}}\approx I^{\infty}$. 
Using the fact that the sets $\dom(E_{k,1},P_k)$, $\dom(E_{k,2},P_k)$,$\ldots,\dom(E_{k,m_k},P_k)$ 
are positively separated (and also the fact that any
closed set $C$ in $Y_k$ has the unique decomposition 
$C=\bigcup_{l=1}^{m_k} (C\cap \dom(E_{k,l},A_k))$ as a disjoint union of 
closed sets contained in the components $\dom(E_{k,l},A_k)$), 
we can easily verify that $Y_k$ with the Hausdorff metric is  homeomorphic 
to the finite product space $\prod_{l=1}^{m_k}2^{\dom(E_{k,l},A_k)}_{E_{k,l}}$,
endowed  with the uniform Hausdorff metric. Therefore $Y_k\approx 
(I^{\infty})^{m_k}\approx I^{\infty}$, and hence $Y\approx I^{\infty}$.
\end{proof}

\section{Continuity of the Dom mapping}\label{sec:DomCont}
\begin{prop}\label{prop:DomContinuous}
Let $X$ be a convex subset of a uniformly convex normed space, and let
$(P_k)_{k\in K}$ be a tuple of nonempty and positively separated sets in $X$, that is,  $\inf\{d(P_k,P_j): j,k\in K, j\neq k\}>0$. Assume that condition \eqref{eq:BallRho} holds (with the same $\rho$) for each $k\in K$ with $P_k$ and $A_k=\bigcup_{j\neq k}P_j$ instead of $P$ and $A$. Suppose that the distance between each $x\in X$ and each $P_k,\,k\in K$, is attained. For each $k\in K$ let $Q_k=\dom(P_k,\bigcup_{j\neq k}P_j)$ and $Y_k=\{C: \,\,C\,\textnormal{is closed}\,\,\textnormal{and}\,\, P_k\subseteq C\subseteq Q_k \}$.
 Let $Y=\prod_{k\in K}Y_k$, endowed with the uniform Hausdorff metric 
$\wt{D}$ defined by $\wt{D}((S_k)_{k\in K},(S'_k)_{k\in K})
= \sup\{D(S_k,S'_k): k\in K\}$. Then  $\Dom$ maps $Y$ into itself and
is uniformly continuous there. 
\end{prop}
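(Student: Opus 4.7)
The plan is to first verify that $\Dom(Y)\subseteq Y$, and then to deduce uniform continuity by applying the stability result Theorem \ref{thm:StabilityUC} coordinate-wise, once a positive lower bound on $d(P_k,\bigcup_{j\neq k}R_j)$ has been established that is uniform over all $R=(R_k)_{k\in K}\in Y$. Throughout, I abbreviate $A_k(R)=\bigcup_{j\neq k}R_j$.

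For the first claim, since $P_j\subseteq R_j\subseteq Q_j$ for every $j$, we have $A_k(R)\supseteq A_k$, hence $d(x,A_k(R))\leq d(x,A_k)$ for every $x\in X$. This immediately yields $\dom(P_k,A_k(R))\subseteq \dom(P_k,A_k)=Q_k$, while $P_k\subseteq \dom(P_k,A_k(R))$ is trivial because $d(p,P_k)=0$ for $p\in P_k$. Hence $\Dom(R)\in Y$.

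For uniform continuity, set $r:=\inf\{d(P_i,P_j):i\neq j\}>0$. The crucial observation is that $d(P_k,A_k(R))\geq r/2$ for every $k$ and every $R\in Y$. Indeed, if $x\in R_j\subseteq Q_j$ for some $j\neq k$, then $d(x,P_j)\leq d(x,A_j)\leq d(x,P_k)$ since $P_k\subseteq A_j$; if in addition $d(x,P_k)<r/2$, choosing $p\in P_k$ and $q\in P_j$ with $d(x,p)<r/2$ and $d(x,q)<r/2$ would give $d(p,q)<r$, contradicting $d(P_k,P_j)\geq r$. Condition \eqref{eq:BallRho} also passes automatically from $A_k$ to the larger set $A_k(R)$ with the same $\rho$. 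Given $\epsilon\in(0,r/12)$, I would then invoke Theorem \ref{thm:StabilityUC} with $P=P'=P_k$, $A=A_k(R)$, $A'=A_k(R')$ (distances to $P_k$ being attained by hypothesis). The resulting $\Delta$ depends only on $\epsilon$, $r$, $\rho$, and the modulus of uniform convexity, so it is uniform in $R,R'\in Y$ and in $k$. Finally, a routine Hausdorff estimate gives $D(A_k(R),A_k(R'))\leq \wt{D}(R,R')$: any $a\in A_k(R)$ lies in some $R_j$ with $j\neq k$, whence $d(a,A_k(R'))\leq d(a,R'_j)\leq D(R_j,R'_j)\leq \wt{D}(R,R')$, and symmetrically. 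Taking the supremum over $k$ yields $\wt{D}(R,R')<\Delta\Rightarrow \wt{D}(\Dom(R),\Dom(R'))\leq \epsilon$.

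The principal obstacle is the uniform separation step. A priori, as the $R_j$ swell from $P_j$ up to $Q_j$, one might worry that they approach $P_k$ and destroy the hypothesis $d(P,A)>0$ needed to apply the stability theorem; the dominance-region constraint $R_j\subseteq Q_j$ is precisely what prevents this, via the triangle-inequality argument above (which is in the spirit of Lemma \ref{lem:DistanceToP}). A secondary but essential point, built into the formulation of Theorem \ref{thm:StabilityUC}, is that the modulus $\Delta$ depends only on $\epsilon$, on the lower bound for $d(P,A)$, and on $\rho$, not on the particular $A$; this is what allows the componentwise estimate to be genuinely uniform over $Y$.
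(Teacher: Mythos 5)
Your proposal is correct and follows essentially the same route as the paper: show $\Dom(Y)\subseteq Y$ via the antitonicity of $\dom(P,\cdot)$, establish the uniform lower bound $d(P_k,\bigcup_{j\neq k}R_j)\geq r/2$ using $R_j\subseteq Q_j\subseteq \dom(P_j,P_k)$, and then apply Theorem \ref{thm:StabilityUC} componentwise together with the estimate $D(\bigcup_{j\neq k}R_j,\bigcup_{j\neq k}R'_j)\leq \wt{D}(R,R')$. The only cosmetic difference is that you prove the separation bound directly by the triangle inequality where the paper cites Lemma \ref{lem:DistanceToP}, and you make explicit (as the paper's argument also implicitly requires) that the $\Delta$ of Theorem \ref{thm:StabilityUC} is uniform over the admissible sets $A$.
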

\begin{proof}
 First, note that for any nonempty subsets $P,A,B$ of $X$, if $A\subseteq B$, then  $\dom(P,B)\subseteq \dom(P,A)$. Now let $S=(S_k)_{k\in K}\in Y$ be given. By the definition of $Y_k$, we have $P_k\subseteq S_k$ for each $k\in K$, so $W_k:=\dom(P_k,\bigcup_{j\neq k}S_j)\subseteq \dom(P_k,\bigcup_{j\neq k}P_j)=Q_k$ by the definition of $Q_k$. Since the $k$-th component of  $W:=\Dom(S)$ is  the closed subset $W_k$, we conclude that $(P_k)_{k\in K}\subseteq \Dom(S)\subseteq (Q_k)_{k\in K}$, and hence $\Dom$ maps $Y$ into itself.

Now let $\epsilon>0$ satisfy $12\epsilon\leq\inf\{d(P_k,P_j): j,k\in K, j\neq k\}$, and let $\Delta$ correspond to $\epsilon$ in Theorem \ref{thm:StabilityUC}. Let $S=(S_k)_{k\in K},S'=(S'_k)_{k\in K}$ be any two tuples in $Y$ satisfying $\wt{D}(S,S')<\Delta$. As a result, we also have $D(S_k,S'_k)<\Delta$ for each $k\in K$. This implies that $D(\bigcup_{j\neq k}S_j,\bigcup_{j\neq k}S'_j)\leq \Delta$ for each $k\in K$, because $D(\bigcup_{j\neq k}S_j,\bigcup_{j\neq k}S'_j)\leq \sup\{D(S_j,S'_j): j\neq k\}$.

Fix $k\in K$. If $x\in \bigcup_{j\neq k}S_j$, then $x\in S_j$ for some $j\neq k$, so $x\in Q_j=\dom(P_j,\bigcup_{i\neq j}P_i)\subseteq \dom(P_j,P_k)$. Hence $d(x,P_k)\geq 6\epsilon$ by  Lemma  \ref{lem:DistanceToP}. Thus
 $d(P_k,\bigcup_{j\neq k}S_j)\geq 6\epsilon$ for each $k\in K$.
Hence  all  the conditions of Theorem \ref{thm:StabilityUC} are satisfied  
(here $P=P_k=P'$, $A=\bigcup_{j\neq k}S_j$, $A'=\bigcup_{j\neq k}S'_j$ and 
condition \eqref{eq:BallRho} of Theorem \ref{thm:StabilityUC} for $A$ 
follows from condition \eqref{eq:BallRho} for  $A_k=\bigcup_{j\neq k}P_j$ 
because $P_j\subseteq S_j$). Thus
$D(\dom(P_k,\bigcup_{j\neq  k}S_j),\dom(P_k,\bigcup_{j\neq k}S'_k))<\epsilon$ for each $k\in K$. By the definition of $\Dom$ and $\wt{D}$, we have $\wt{D}(\Dom(S),\Dom(S'))\leq \epsilon$, so $\Dom$ is indeed uniformly continuous on $Y$.
\end{proof}

\section{The main result}\label{sec:Main}
\begin{prop}\label{prop:ZoneFiniteOrder}
Let $X$ be a compact and convex subset of a uniformly convex
normed space, and let $(P_k)_{k\in K}$ be a finite tuple of finite sets in $X$ which are pairwise disjoint. Suppose that for each $k\in K$, the site $P_k$ has the emanation property. Then there exists a zone diagram with respect to these sites.
\end{prop}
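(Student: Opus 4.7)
The plan is to apply the Schauder fixed point theorem to $\Dom$ transported to the Hilbert cube. I would take $Y=\prod_{k\in K}Y_k$ exactly as constructed in Proposition \ref{prop:HilbertCube}, with $Q_k=\dom(P_k,\bigcup_{j\neq k}P_j)$ and $Y_k=\{C:\,C\,\textnormal{is closed},\,P_k\subseteq C\subseteq Q_k\}$, endowed with the uniform Hausdorff metric. All hypotheses of Proposition \ref{prop:HilbertCube} follow immediately from the assumptions of the present proposition (the $P_k$ are finite, pairwise disjoint, each has the emanation property, and $X$ is compact and convex in a uniformly convex normed space), so $Y\approx I^{\infty}$ via some homeomorphism $h:I^{\infty}\to Y$.

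Next I would verify the hypotheses of Proposition \ref{prop:DomContinuous} in this setting. Since $K$ is finite, each $P_k$ is finite (hence compact), and the $P_k$ are pairwise disjoint, we have the positive separation $\inf\{d(P_k,P_j):j\neq k\}>0$. Condition \eqref{eq:BallRho} for each $A_k=\bigcup_{j\neq k}P_j$ is trivial since $X$ is compact: any $\rho$ larger than the diameter of $X$ works. The distance from every $x\in X$ to every $P_k$ is attained because each $P_k$ is finite. Proposition \ref{prop:DomContinuous} therefore applies, so $\Dom$ maps $Y$ into itself and is uniformly continuous there.

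Finally, the composition $f=h^{-1}\circ\Dom\circ h:I^{\infty}\to I^{\infty}$ is continuous, and $I^{\infty}$, being a compact convex subset of $\ell_2$, satisfies the hypotheses of Theorem \ref{thm:Schauder}. Hence $f$ has a fixed point $q\in I^{\infty}$, and $R:=h(q)\in Y$ then satisfies $\Dom(R)=h(f(q))=h(q)=R$. Writing $R=(R_k)_{k\in K}$, each $R_k$ is a closed set with $P_k\subseteq R_k$, hence nonempty, so $R$ is a genuine zone diagram for $(P_k)_{k\in K}$. The real obstacle in this whole argument is not located in the present proposition itself but in its two driving inputs: first, the local path connectedness of dominance regions (Lemma \ref{lem:LocConnected}) which enables Curtis--Schori to give $Y\approx I^{\infty}$, and second, the uniform stability Theorem \ref{thm:StabilityUC} underlying the continuity of $\Dom$. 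Once these are in hand, the proof of the present proposition is just the Schauder--Hilbert-cube wrapper.
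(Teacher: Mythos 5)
Your proposal is correct and follows essentially the same route as the paper: the paper's proof likewise takes $Y$ from Propositions \ref{prop:HilbertCube} and \ref{prop:DomContinuous}, notes that $\Dom$ is a continuous self-map of $Y\approx I^{\infty}$, and invokes the Schauder fixed point theorem as outlined in Section \ref{sec:details}. You have merely spelled out the verification of the hypotheses of those two propositions (positive separation, condition \eqref{eq:BallRho}, attainment of distances) in more detail than the paper does, which is harmless and accurate.
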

\begin{proof}
Given the finite sites $(P_k)_{k\in K}$, let $Y$ be as in Proposition  \ref{prop:DomContinuous}. Since $\Dom$ maps $Y$ into itself and is continuous there by Propostion \ref{prop:DomContinuous}, and since $Y$ is homeomorphic to the Hilbert cube by Proposition \ref{prop:HilbertCube},
we deduce the existence of a zone diagram from the Schauder fixed
point theorem, as explained in the beginning of Section \ref{sec:details}.
\end{proof}
\begin{thm}\label{thm:CompactSites}
Let $X$ be a compact and convex subset of a uniformly convex
normed space, and let $(P_k)_{k\in K}$ be a finite tuple of compact sets in $X$  which are pairwise disjoint. Suppose that for each $k\in K$, the site $P_k$ has the emanation property. Then there exists a zone diagram with respect to these sites.
\end{thm}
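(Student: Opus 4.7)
The natural strategy is to approximate each compact site $P_k$ by finite subsets and pass to a Hausdorff limit of the corresponding zone diagrams. For each $n\in\N$, choose a finite $(1/n)$-net $P_k^n\subseteq P_k$, so that $D(P_k^n,P_k)\to 0$. Since the emanation property of $P_k$ is a pointwise condition and $P_k^n\subseteq P_k$, each $P_k^n$ also has the emanation property. Because the compact sites $P_k$ are pairwise disjoint, $r_0:=\min_{j\neq k}d(P_j,P_k)>0$, and Hausdorff convergence $P_k^n\to P_k$ yields $d(P_j^n,P_k^n)\geq r_0/2$ for all sufficiently large $n$; in particular the finite tuple $(P_k^n)_{k\in K}$ is pairwise disjoint. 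Proposition \ref{prop:ZoneFiniteOrder} then supplies, for each such $n$, a zone diagram $R^n=(R_k^n)_{k\in K}$ with respect to $(P_k^n)_{k\in K}$.

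Since $X$ is compact, the hyperspace of its nonempty closed subsets is compact in the Hausdorff metric, and so is the finite product $\prod_{k\in K}$ of this hyperspace under the uniform Hausdorff metric (here finiteness of $K$ matters). By passing to a subsequence I may assume $R_k^n\to R_k$ for every $k\in K$. Each $R_k$ is closed and nonempty; moreover, since $P_k^n\subseteq R_k^n$ and $P_k^n\to P_k$, a standard Hausdorff-limit argument gives $P_k\subseteq R_k$. To finish, it suffices to verify the fixed-point identity $R_k=\dom(P_k,\bigcup_{j\neq k}R_j)$ for every $k$.

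The crucial step is a uniform separation estimate which legitimizes invoking Theorem \ref{thm:StabilityUC} in the limit. Since $R_j^n=\dom(P_j^n,\bigcup_{i\neq j}R_i^n)\subseteq\dom(P_j^n,\bigcup_{i\neq j}P_i^n)\subseteq\dom(P_j^n,P_k^n)$ for $j\neq k$ (using $P_i^n\subseteq R_i^n$ and the anti-monotonicity of $\dom(P,\cdot)$ in its second argument), Lemma \ref{lem:DistanceToP} applied with $P=P_k^n$ and $A=P_j^n$ yields $d(x,P_k^n)\geq d(P_j^n,P_k^n)/2\geq r_0/4$ for every $x\in R_j^n$ and all large $n$. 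Passing to the Hausdorff limit gives $d(x,P_k)\geq r_0/4$ for every $x\in R_j$ and $j\neq k$, and therefore $d(P_k,\bigcup_{j\neq k}R_j)\geq r_0/4>0$. Now apply Theorem \ref{thm:StabilityUC} with $P=P_k$, $P'=P_k^n$, $A=\bigcup_{j\neq k}R_j$, and $A'=\bigcup_{j\neq k}R_j^n$: condition \eqref{eq:BallRho} holds trivially because $X$ is bounded, distances to $P_k$ and $P_k^n$ are attained because these sets are compact, and $D(A,A')\leq\max_{j\neq k}D(R_j,R_j^n)\to 0$ together with $D(P_k,P_k^n)\to 0$. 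The theorem gives $\dom(P_k^n,A')\to\dom(P_k,A)$ in Hausdorff distance, but $\dom(P_k^n,A')=R_k^n\to R_k$, whence $R_k=\dom(P_k,\bigcup_{j\neq k}R_j)$, as required.

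I expect the main obstacle to be the uniform lower bound $d(P_k,\bigcup_{j\neq k}R_j)\geq r_0/4$. Theorem \ref{thm:StabilityUC} genuinely requires positive separation between the two arguments of $\dom$, so without transporting the separation $d(P_k^n,A_k^n)\geq r_0/2$ from the prelimit zone diagrams to the candidate limit $R$, the continuity step at the heart of the argument would collapse. Once this separation is secured (via Lemma \ref{lem:DistanceToP} plus a routine Hausdorff-limit argument), the remainder is a mechanical compactness-and-continuity bootstrap from the finite-site case already handled by Proposition \ref{prop:ZoneFiniteOrder}.
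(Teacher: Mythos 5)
Your proposal is correct and follows essentially the same route as the paper's own proof: approximate each compact site by finite $(1/m)$-nets, invoke Proposition \ref{prop:ZoneFiniteOrder}, extract a Hausdorff-convergent subsequence by compactness of the hyperspace, secure the positive separation $d(P_k,\bigcup_{j\neq k}R_j)>0$ via Lemma \ref{lem:DistanceToP}, and pass to the limit in the fixed-point equation using Theorem \ref{thm:StabilityUC}. Your explicit remark that subsets inherit the emanation property (needed to apply Proposition \ref{prop:ZoneFiniteOrder} to the nets) is a point the paper leaves implicit, but otherwise the arguments coincide.
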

\begin{proof}
Let $(P_k)_{k\in K}$ be the given finite tuple of pairwise disjoint compact sites.
Let $r=\min\{d(P_k,P_j): j\neq k\}$. By the compactness of the sites, 
for each positive integer $m$ and for each $k\in K$, 
there exists a finite subset  $P_{m,k}$ of $P_k$ 
such that $D(P_{m,k},P_k)<1/m$. 
Note that $d(P_{m,k},P_{m,j})\geq r$ whenever $j\neq k$. For each $m$, let 
$R_m=(R_{m,k})_{k\in K}$ be a zone diagram corresponding to the tuple $(P_{m,k})_{k\in K}$, the existence of which is guaranteed by Proposition \ref{prop:ZoneFiniteOrder}. In other words,
\begin{equation}\label{eq:R_mk}
R_{m,k}=\dom(P_{m,k},\bigcup_{j\neq k}R_{m,j})\quad \forall k\in K.
\end{equation}
Since the space of all nonempty compact subsets of $X$ (endowed with the
Hausdorff metric) is compact \cite{IllanesNadler}, and since $K$ is finite,  we can find a convergent subsequence of the finite sites and the components of the corresponding zone diagrams. Therefore  for some subsequence $(m_l)_{l=1}^{\infty}$ of positive integers the sequence $(R_{m_l,k})_{l=1}^{\infty}$ converges  to some set $R_k$, and the sequence  $(P_{m_l,k})_{l=1}^{\infty}$ converges to $P_k$ by the definition of $P_{m,k}$. Hence $\bigcup_{j\neq k}R_{m_l,j}$ converges to $\bigcup_{j\neq k}R_{j}$, because $D(\bigcup_{j\neq k}R_{m_l,j},\bigcup_{j\neq k}R_{j})\leq \max \{D(R_{m_l,j},R_j): j\neq k\}$.

Let $\epsilon'=r/12$. Then $12\epsilon' \leq \min\{d(P_{m_l,k},P_{m_l,j}): j\neq k\}$. Let $k\in K$. If $j\neq k$ and $x\in \bigcup_{j\neq k}R_{m_l,j}$, then $x\in R_{m_l,j}$ for some $j\neq k$. Therefore 
\begin{equation*}
x\in \dom(P_{m_l,j},\bigcup_{i\neq j}R_{m_l,i})\subseteq \dom(P_{m_l,j},\bigcup_{i\neq j}P_{m_l,i})\subseteq \dom(P_{m_l,j},P_{m_l,k}). 
\end{equation*}
Hence $d(x,P_{m_l,k})\geq 6\epsilon'$ by Lemma  \ref{lem:DistanceToP}, and, as a result, $d(P_{m_l,k},\bigcup_{j\neq k}R_{m_l,j})\geq 6\epsilon'$ for each $k\in K$ and $l$. Thus $d(P_k,\bigcup_{j\neq k}R_{j})\geq 6\epsilon'$ and hence the positive distance condition in Theorem \ref{thm:StabilityUC} holds. From   \eqref{eq:R_mk} and the continuity of $\dom(\cdot,\cdot)$ (Theorem ~\ref{thm:StabilityUC}) we conclude that $R=(R_k)_{k\in K}$ is a zone diagram with respect to $(P_k)_{k\in K}$.
\end{proof}
The hypotheses of Theorem \ref{thm:CompactSites} are satisfied, in particular, when all the compact sites are contained in the interior of a closed ball (or another compact and convex set) in $\R^n$.

\section{Concluding remarks and open problems}\label{sec:end}
In this short section we describe several interesting questions and 
directions for further investigation.

 An interesting problem is whether the dominance region $\dom(P,A)$ is
locally connected for general positively separated sets $P$ and $A$. If
so, then this will suggest an alternative method for proving the existence
of a zone diagram with respect to compact sites. Second, it would be interesting to extend the existence result further, 
say to all normed spaces, without any compactness requirement on the sites 
and the  subset $X$. The question of uniqueness is interesting too. It 
seems that the uniform convexity assumption on the norm is not sufficient 
even in the plane with two singleton sites, as shown in \cite{KMT}, but 
perhaps, following \cite{KMT},  uniform convexity combined with uniform smoothness of the norm will imply uniqueness in the infinite dimensional case too. It would also be of interest to establish our main theorem (Theorem \ref{thm:CompactSites})  and auxiliary results (e.g., Lemmata \ref{lem:homeomorphism} and  \ref{lem:LocConnected}) without imposing the condition of the emanation property on the points. 

Another interesting and natural problem is how to approximate a zone
diagram in the setting described in this paper. It turns out that there
is a way to do it, and a key point in carrying out this task is to use the
algorithm for computing Voronoi diagrams of general sites in general spaces
described in  \cite{ReemISVD09,ReemPhD}. For the sake of completeness,
we include two pictures of zone diagrams in the plane with two different  norms.
\begin{figure}
\begin{minipage}[t]{0.45\textwidth}
\begin{center}
{\includegraphics[scale=0.73]{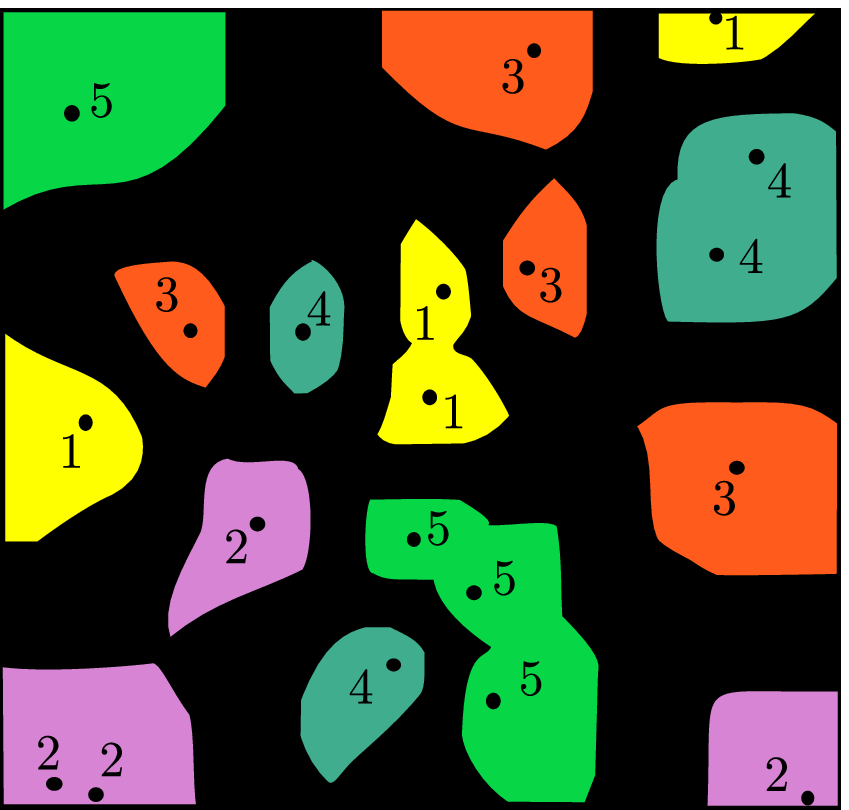}}
\end{center}
 \caption{A zone diagram of 5 sites, each with 4 points, in a square in  $(\R^2,\ell_6)$.}
\label{fig:ZD-5gen-4in-003-6}
\end{minipage}
\hfill
\begin{minipage}[t]{0.45\textwidth}
\begin{center}
{\includegraphics[scale=0.8]{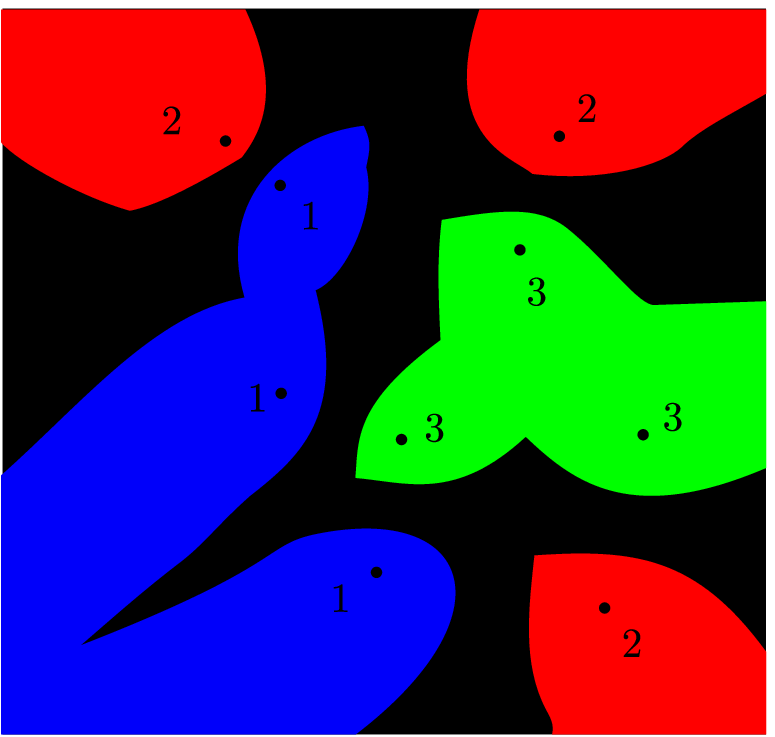}}
\end{center}
 \caption{A zone diagram  of 3 sites in a square in $(\R^2,\ell_p),\,p=3.14159$, each
with 3 points.}
\label{fig:ZD-3gen-3in-03-314159}
\end{minipage}
\end{figure}
The proof of Theorem ~\ref{thm:CompactSites} shows that one can approximate
the given compact sites by finite subsets of them and then the
corresponding zone diagram approximates the real one. Unfortunately, no
error estimates are obtained in the proof, and it would indeed be of 
interest to find such  estimates.
\vspace{0.5cm}\\
\noindent{\bf Acknowledgements}\\
\noindent We thank  Akitoshi Kawamura,  Ji\v{r}\'{i} Matou\v{s}ek, and
Takeshi Tokuyama for providing a copy of their recent paper \cite{KMT} before posting it on the arXiv. The first author was supported by Grant
FWF-P19643-N18. The third author was partially
supported by the Israel Science Foundation (Grant 647/07), the Fund for
the Promotion of Research at the Technion (Grant 2001893), and by the
Technion President's Research Fund (Grant 2007842). All the authors thank Orr Shalit and the referee for their helpful suggestions and corrections. We are very grateful to Eva Goldman for kindly redrawing the figures so expertly.

\bibliographystyle{amsplain}
\bibliography{biblio}
\vspace{0.5cm}

\end{document}